\newtheorem{theorem}{Theorem}[section]
\newtheorem{corollary}[theorem]{Corollary}
\newtheorem{definition}[theorem]{Definition}
\newtheorem{lemma}[theorem]{Lemma}
\newtheorem{proposition}[theorem]{Proposition}
\newtheorem{remark}[theorem]{Remark}
\newenvironment{proof}[1][Proof]{\textbf{#1.} }{\ \rule{0.5em}{0.5em}}
\newcommand{\refeqn}[1]{(\ref{#1})}
\newcommand{\virgolette}[1]{``#1''}
\newcommand{\cinf}[0]{C^{\infty}}
\newcommand{\matr}[0]{\operatorname{Mat}}
\newcommand{\spann}[0]{\operatorname{span}}
\begin{document}

\title{Reduction and reconstruction of stochastic differential equations via symmetries }
\author{Francesco C. De Vecchi\thanks{Dip. di Matematica, Universit\`a degli Studi di Milano, via Saldini 50, Milano, \emph{email: francesco.devecchi@unimi.it}} , Paola Morando\thanks{DISAA, Universit\`a degli Studi di Milano, via Celoria 2, Milano, \emph{email: paola.morando@unimi.it}}  and Stefania Ugolini\thanks{Dip. di Matematica, Universit\`a degli Studi di Milano, via Saldini 50, Milano, \emph{email: stefania.ugolini@unimi.it}}}


\date{}

\maketitle
\abstract{
An algorithmic method to exploit a general class of infinitesimal symmetries for reducing  stochastic differential equations is presented and
 a natural definition of reconstruction,  inspired by the classical reconstruction by quadratures, is proposed.  As a side result
 the well-known  solution formula for linear one-dimensional stochastic differential equations is obtained within this symmetry approach. The complete procedure is applied to several examples
 with both theoretical and  applied relevance.}

\bigskip

\noindent\textbf{Keywords}: Stochastic Differential Equations, Symmetry

\noindent\textbf{MSC numbers}: 60H10, 58D19

\section{Introduction}

Stochastic differential equations (SDEs) and, more in general, stochastic processes provide a very useful tool for  the description of many phenomena in physics,
meteorology, biology, social sciences, economics or finance.
For this reason there is a great interest in finding new simple and flexible models  with some kind of  analytical tractability. Just to mention some recent examples,
in \cite{Craddock2012,Platen2010} the Authors focus on models with closed form of the marginal probability density, while in \cite{polynomial_processes} systems with
explicit formulas for the moments are discussed. Moreover,
in   \cite{Craddock2015,Cordoni2014,affine},  systems with exact expression for the Fourier (or generalized Laplace) transform of some significant functionals of the
process are considered. \\
These examples also suggest that the knowledge of closed-form expressions   for some mathematical objects related with  SDEs can be useful in order to formulate faster
or more stable algorithms for numerical simulation (see e.g. \cite{Bormetti2014,Heston1993,Malham_Wiese2008}), to propose better estimators for statistical inference
(see e.g. \cite{Affine_parameter_estimation,Belomestny_Reiss2006,Forman2008}) or to reduce the complexity of the  models using asymptotic expansions or perturbation
theory techniques (see e.g. \cite{SABR2,Lorgin_Pagliarani_Pascucci2015}).\\ 
For (deterministic) ordinary differential equations (ODEs) a powerful method to select  general systems admitting closed analytical formulas for their solutions
is the infinitesimal symmetries method originally  proposed by Sophus Lie and thereafter  developed by many Authors (see, e.g. \cite{Olver,Stephani} and references therein).
 The underlying idea is to provide  an algorithmic procedure in order  to identify symmetric ODEs and to exploit  their symmetries for simplifying  them.\\
Despite the well acknowledged and rich literature in the deterministic setting, the concept of infinitesimal symmetries for SDEs is quite recent
\cite{DMU,Gaeta1,Lazaro_Ortega,Lescot,Grigoriev,Zung} and their use for reduction purposes  is not yet completely developed.\\
The principal aim of this paper is to investigate the possible applications of a symmetry approach to  SDEs  taking the cue from  the deterministic case.
 We   propose a simple algorithmic method to exploit  infinitesimal symmetries for reducing a SDE and,  in the particular case of SDEs admitting a solvable
  Lie Algebra of symmetries,  we provide a reconstruction procedure allowing to obtain the  solution to the original SDE starting from the solution to the reduced one.

Although these basic applications of infinitesimal symmetries have already been discussed in  \cite{Kozlov2010,Lazaro_Ortega,Zambrini2010,Zung},
there are several novelties in our approach. First of all we use the weaker notion of infinitesimal symmetry of a SDE proposed in \cite{DMU}, including both
 \emph{strong symmetries}, used for reduction and reconstruction in \cite{Lazaro_Ortega}, and \emph{quasi-strong symmetries} used  for reduction in \cite{Zung}.
  It is worth to remark that our  notion of symmetry  allows us to take into account also random time change transformations which have not been considered by previous
  Authors. Moreover, the choice of a SDE formulation to deal  with  symmetries is more convenient than a formulation based on the generator operator of the SDE.  Indeed, even though the notion of symmetry    based on
   the generator is quite effective for reduction purposes, it turns out to be inadequate when one aims at generalizing to the stochastic framework  the standard reconstruction by quadratures. \\
A further advantage of our method, inspired by the original ideas of Lie, is that we do not need the existence of a Lie group action related to the infinitesimal
symmetries as required in \cite{Lazaro_Ortega,Zung}. The possibility of working    with both the global and the local  action of a Lie group turns out to be very useful in
 order to deal with   stochastically complete SDEs admitting  infinitesimal symmetries which do not generate a globally defined flow of  diffeomorphisms (see the example of Subsection \ref{subsection_singular}).\\
Furthermore we propose a notion of reconstruction inspired by the classical idea of reconstruction by quadratures and similar to the one proposed in \cite{Kozlov2010} for strong symmetries. We remark that  this concept  is different from the one proposed in \cite{Lazaro_Ortega} and  our corresponding  natural notion of integrability, which is in some respects more restrictive than that of \cite{Zung} by not including higher order symmetries, allows us to exploit (not only Abelian but) general solvable algebras of  symmetries. Besides,  we recall that the stochastic quadrature procedure for one-dimensional diffusion processes proposed in \cite{Zambrini2010} cannot be directly related to our results, since it is based on a well-defined variational structure. We discuss some particular examples of SDEs with a variational structure in Subsection \ref{subsection_mechanics}.\\
It is important to note that our approach is  completely explicit and allows us to compute symmetries of a SDE  by solving an  overdetermined
system of first order PDEs.   In particular we apply our complete procedure to
a class of one-dimensional diffusions reducing to linear SDEs for a particular choice of the parameters. In this case,
considering a suitable two-dimensional SDE including the original one, we are able to find the  explicit solutions recovering
the well known solution formula for one-dimensional linear SDEs,
together with the usual
change of variables coupled with the associated  homogeneous equation.

Moreover we consider a class of  (stochastic) mechanical models which includes the standard perturbations of stochastic Lagrangian systems.
In particular, starting from a mechanical system describing
a particle subjected to  forces  depending on the velocities, we look for general stochastic perturbations of the deterministic system  preserving the symmetries
and we analyze in details a  couple of significant
examples within this  class.\\
Finally, we apply our symmetry approach to prove the integrability of a well-known financial stochastic model (SABR) and we discuss the possible generalizations of our results suggested by this analysis.  \\
The paper is organized  as follows. In Section \ref{section_preliminary}, for the convenience of the reader, we collect some general facts about foliations and reduction maps. In Section \ref{section_weak_symmetry} we introduce the definition of infinitesimal symmetry of a SDE as proposed in \cite{DMU} and we recall some results and formulas we need in  the rest of the paper. Reduction, reconstruction and integrability of a SDE are described in  Section \ref{section_reduction_reconstruction} and in Section \ref{section_examples} the general  theory is applied  to some explicit relevant  examples in order to point out the effectiveness of the proposed method.

\section{Some geometric preliminaries}\label{section_preliminary}

In this section we recall some geometric preliminaries  needed in the  following. In particular in Subsection \ref{subsectionFoliations and projections} we introduce a class of foliations  which turns out to be useful in order to reduce SDEs, whereas in Subsection \ref{subsection_solvable_coordinate}  we describe adapted coordinate systems for  solvable Lie algebras of vector fields which  are exploited in the reconstruction process.\\
We work  in an open set $M$ of the Euclidean space $\mathbb{R}^n$ with a cartesian coordinate system $x^i$  and  we  denote by $U(x_0)$   an open neighborhood of a point $x_0 \in M$. All (real or matrix-valued) functions on $M$ are smooth and  we denote by $\partial_i$ the partial derivatives with respect to $x^i$. If $A:M \rightarrow \matr(k,m)$, we write $A^l_r$ for the $l$-th row and $r$-th column component of the matrix $A$,  identifying $\matr(k,1)$ with $\mathbb{R}^k$. \\
Given a function $\Psi:M \rightarrow \mathbb{R}^h$  and a vector field $Y\in TM$, the push forward of $Y$ by $\Psi$ is defined by
$$\Psi_*(Y)=\nabla(\Psi) \cdot Y,$$
where
$$(\nabla (\Psi))^i_j=\partial_j(\Psi^i), \qquad i=1,...,h \qquad j=1,...,n$$
and $\cdot$ denotes the usual matrix product.
If $A:M \rightarrow \matr(h,k)$ we define $Y(A):M \rightarrow \matr(h,k)$ as
$$(Y(A))^i_j=Y(A^i_j),$$
where $ Y(A^i_j)$ is the usual notation for the vector field $Y$ applied to the real-valued functions $A^i_j$.\\
In the following we denote by $I_{n}$ the $n$-dimensional identity matrix and by $SO(m)$ the group of the orthogonal matrices.

\subsection{Foliations and projections}\label{subsectionFoliations and projections}

 Let  $Y_1,...,Y_k$  be  a  set of vector fields on $M$ such that the  distribution $\Delta =\spann \{Y_1,...,Y_k\}$ is of  constant rank $r$.
 If  $\Delta$ is integrable, i.e. $[Y_i,Y_j] \in \Delta$ for every $i,j=1,...,k$, then $\Delta$ defines a foliation on $M$. Moreover, if
there is a submersion $\Psi:M \rightarrow M'$, where (possibly  restricting $M$) $M'$ is an open subset of $\mathbb{R}^{n-r}$ such that
$$\Delta=\ker(\nabla(\Psi))$$
and the level sets of $\Psi$ are connected subsets of $M$, the foliation defined  by  $\Delta$ can be used for   reduction purposes. In fact, under these assumptions,  $\Psi$ is a surjective submersion and  the level sets of $\Psi$ are connected closed submanifolds of $M$.
\begin{definition}\label{definition_reduction_symmetries}
A surjective submersion $\Psi:M \rightarrow M'$ is  a \emph{reduction map} if $ \Psi$  has connected level sets.
The vector fields $Y_1,...,Y_k$ generating an integrable distribution $\Delta$ of constant rank $r$ are   \emph{reduction vector fields} for the reduction map $\Psi:M \rightarrow M'$ if
$$\spann\{Y_1(x),...,Y_k(x)\}=\ker \nabla(\Psi)(x) \qquad  \forall x \in M.$$
\end{definition}
We remark that, if $Y_1,...,Y_k$ are reduction vector fields for the reduction map $\Psi$, then $(M,\Psi,M')$ is a fibred manifold.\\
\begin{definition}\label{definition_regular_vectors}
A set of vector fields  $Y_1,...,Y_r$ on $M$ is  \emph{regular} on $M$ if, for any $x \in M$, the vectors $Y_1(x),...,Y_r(x)$ are linearly independent.
\end{definition}
A set of vector fields $Y_1,...,Y_k$ generating an integrable distribution $\Delta$ of constant rank does not admit in general a global reduction map, but the following well known local result holds.
\begin{proposition}[Frobenius theorem]\label{proposition_reduction}
Let $Y_1,...,Y_k$ be a set of vector fields generating a regular integrable distribution $\Delta$ of constant rank  $r$. Then, for any $x \in M$, there exist a neighborhood $U$ of $x$ and a reduction map $\Psi:U \rightarrow U' \subset \mathbb{R}^{n-r}$ such that $Y_1,...,Y_k$ are reduction vector fields for $\Psi$.
\end{proposition}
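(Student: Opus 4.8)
The statement is the classical Frobenius theorem recast in the language of reduction maps, so the plan is to produce, near a given $x$, a coordinate system in which $\Delta$ becomes the span of the first $r$ coordinate vector fields; the projection onto the remaining coordinates is then the desired $\Psi$. First I would pass from the generators $Y_1,\dots,Y_k$ to a local frame. Since $\Delta$ has constant rank $r$, on a neighborhood of $x$ one may select $r$ of the $Y_i$ that are pointwise linearly independent and hence span $\Delta$; relabel them $Y_1,\dots,Y_r$. Writing these in the cartesian coordinates $x^i$ as the rows of an $r \times n$ matrix $B$, regularity forces $\rank B = r$, so after permuting the coordinates the leading $r \times r$ block $B_1$ is invertible near $x$. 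Replacing the frame by $Z = B_1^{-1} B$ — i.e.\ taking suitable smooth linear combinations — yields a new frame $Z_1,\dots,Z_r$ of $\Delta$ of the normalized form $Z_i = \partial_i + \sum_{a>r} c_i^a \partial_a$, in which the coefficients of $\partial_1,\dots,\partial_r$ are the constants $\delta_i^l$.

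The key step is to upgrade this to a commuting frame. Because each $Z_i$ carries constant ($\delta$-valued) coefficients along $\partial_1,\dots,\partial_r$, the bracket $[Z_i,Z_j]$ has vanishing $\partial_1,\dots,\partial_r$ components. On the other hand, integrability gives $[Z_i,Z_j] \in \Delta = \spann\{Z_1,\dots,Z_r\}$, and any field in $\Delta$ is uniquely determined by its first $r$ components, precisely because the $Z_l$ carry the independent leading terms $\partial_l$. Hence $[Z_i,Z_j]=0$ for all $i,j$, and $Z_1,\dots,Z_r$ is a commuting frame for $\Delta$. This is the heart of the matter: involutivity together with the normalization is exactly what forces the brackets to vanish.

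Finally I would invoke the classical simultaneous-straightening theorem for commuting, pointwise independent vector fields: there is a chart $(y^1,\dots,y^n)$ on a neighborhood $U$ of $x$, which I take to be a coordinate cube so that coordinate slices are connected, with $Z_i = \partial_{y^i}$ for $i=1,\dots,r$. In these coordinates $\Delta = \spann\{\partial_{y^1},\dots,\partial_{y^r}\}$, and I define $\Psi: U \rightarrow U' \subset \mathbb{R}^{n-r}$ by $\Psi(y)=(y^{r+1},\dots,y^n)$. Its differential $\nabla(\Psi)$ has the full-rank block form $[\,0 \mid I_{n-r}\,]$, so $\Psi$ is a submersion with $\ker\nabla(\Psi) = \Delta$, and its level sets are the connected slices $\{y^{r+1}=\mathrm{const},\dots,y^n=\mathrm{const}\}$; thus $\Psi$ is a reduction map in the sense of Definition \ref{definition_reduction_symmetries}. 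Since $\spann\{Y_1(x),\dots,Y_k(x)\}=\Delta(x)=\ker\nabla(\Psi)(x)$ at every point of $U$, the vector fields $Y_1,\dots,Y_k$ are reduction vector fields for $\Psi$, as required. The only delicate points, beyond the bracket computation above, are bookkeeping: selecting the independent subframe consistently near $x$ and shrinking $U$ to guarantee connected level sets.
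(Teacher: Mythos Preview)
Your argument is correct and is one of the standard proofs of the Frobenius theorem. Note, however, that the paper does not actually prove this proposition: it is stated as the classical Frobenius theorem and left without proof, so there is no ``paper's approach'' to compare against. Your normalization-then-commuting-frame route (reducing to simultaneous straightening of commuting vector fields) is entirely fine, and your check that $\Psi$ is a reduction map in the sense of Definition~\ref{definition_reduction_symmetries} (surjective submersion with connected level sets, achieved by taking $U$ a coordinate cube) is the right thing to verify here.
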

We remark that  the classical  reduction of a manifold under a Lie group  action is included in Definition \ref{definition_reduction_symmetries}.
 Indeed, given a connected Lie group $G$ acting on $M$,  we can naturally  define  an equivalence relation and the quotient manifold $M'=M/G:=M/\sim$. If the action of $G$ is proper and free, $M'$ admits a natural structure of $(n-r)$-dimensional manifold (see \cite{Olver}) and the natural projection $\Pi:M \rightarrow M'$ is a submersion. Moreover,  if $G$ is connected and $\{Y_1,...,Y_r\}$ are the generators of the corresponding Lie algebra,  $\Pi$ is a reduction map and
$$\spann\{Y_1(x),...,Y_r(x)\}=\ker(\nabla(\Pi)(x)) \qquad \forall x \in M.$$
In this case $(M,\Psi,M')$ is not only a fibred manifold but also a principal bundle with structure group $G$.

\begin{proposition}\label{lemma_submersion}
Let $\Psi:M \rightarrow M'$ be a reduction map and suppose that the vector fields $\{Y_1,...,Y_k\}$ are reduction vector fields for $\Psi$.
If $M$ is connected,  for any function $f \in \cinf(M)$ such that $Y_i(f)=0$ there exists a unique function $f' \in \cinf(M')$ such that
$$f=f' \circ \Psi.$$
Moreover, if $\mathcal{G}=\spann\{Y_1,...,Y_k\}$ and  $Y$ is a vector field on $M$ such that
$$[Y,\mathcal{G}] \subset \mathcal{G},$$
there exists an unique vector field $Y'$ such that
$$\Psi_*(Y)= \nabla (\Psi) \cdot Y= Y' \circ \Psi.$$
\end{proposition}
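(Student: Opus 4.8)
The plan is to prove the two assertions in sequence, deriving the second from the first. The unifying principle is that a function (or the components of a pushed-forward vector field) descends to $M'$ precisely when it is constant along the fibres of $\Psi$, and that the reduction vector fields detect exactly this constancy.

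First I would treat the factorization of $f$. Since $\spann\{Y_1(x),\dots,Y_k(x)\}=\ker\nabla(\Psi)(x)$ coincides with the tangent space to the fibre $\Psi^{-1}(\Psi(x))$, the hypothesis $Y_i(f)=0$ says exactly that the differential of $f$ annihilates every vector tangent to the fibres. Hence $f$ is locally constant along each fibre, and because the fibres are connected (this is where the reduction-map hypothesis enters decisively) $f$ is in fact constant on each fibre. Consequently $f$ is constant on the equivalence classes $\Psi^{-1}(y)$, so setting $f'(y):=f(x)$ for any $x\in\Psi^{-1}(y)$ gives a well-defined function on $M'=\Psi(M)$; surjectivity of $\Psi$ guarantees $f'$ is defined on all of $M'$, and the same surjectivity forces uniqueness. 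Smoothness of $f'$ is the one point requiring care: I would invoke the local submersion normal form (Proposition \ref{proposition_reduction}) to choose, around each point, coordinates in which $\Psi$ is a linear projection; in such coordinates $f=f'\circ\Psi$ exhibits $f'$ as a smooth function of the base coordinates, and these local expressions agree by the already-established global constancy on fibres.

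For the second assertion I would reduce to the first by working component by component. Writing $\Psi=(\Psi^1,\dots,\Psi^{n-r})$, the $a$-th component of $\Psi_*(Y)=\nabla(\Psi)\cdot Y$ is the function $Y(\Psi^a)$. To apply the first part it suffices to check $Y_i\bigl(Y(\Psi^a)\bigr)=0$ for every $i$ and $a$. Here I would use the commutator identity $Y_i\,Y=Y\,Y_i+[Y_i,Y]$ together with two facts: $Y_i(\Psi^a)=0$ (because $Y_i\in\ker\nabla(\Psi)$), which kills the term $Y(Y_i(\Psi^a))$, and $[Y_i,Y]=-[Y,Y_i]\in\mathcal{G}$ (the hypothesis $[Y,\mathcal{G}]\subset\mathcal{G}$), so that $[Y_i,Y](\Psi^a)$ is a combination of the vanishing quantities $Y_j(\Psi^a)$. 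Thus $Y_i(Y(\Psi^a))=0$, and the first part yields a unique $(Y')^a\in\cinf(M')$ with $Y(\Psi^a)=(Y')^a\circ\Psi$. Assembling the components produces the desired vector field $Y'$ on $M'$, and uniqueness again follows from the surjectivity of $\Psi$.

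I expect the main obstacle to be the smoothness of $f'$ in the first part rather than any algebraic manipulation: the set-theoretic factorization and the commutator computation are essentially formal, but passing from \virgolette{$f$ is constant on each connected fibre} to \virgolette{$f'$ is smooth on $M'$} genuinely requires the local structure of a submersion and a patching argument, and it is precisely the connectedness of the level sets that prevents the local factorizations from disagreeing across overlapping charts.
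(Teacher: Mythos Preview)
Your argument is correct and complete in outline: constancy on connected fibres gives the set-theoretic factorization, the local submersion normal form gives smoothness of $f'$, and the commutator computation $Y_i(Y(\Psi^a))=Y(Y_i(\Psi^a))+[Y_i,Y](\Psi^a)=0$ reduces the second assertion to the first. There is nothing to compare against, since the paper does not give its own proof but simply refers the reader to Chapter~4 of \cite{Moerdijk}; you have effectively supplied the standard argument that the reference contains.
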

\begin{proof}
See Chapter 4 of \cite{Moerdijk}.
${}\hfill$ \end{proof}

\subsection{Solvable algebras and adapted coordinate systems }\label{subsection_solvable_coordinate}

For later use, in this subsection we discuss the local existence of a suitable  adapted coordinate system on $M$ such that the generators $Y_1,...,Y_r$ of a solvable Lie algebra have a special form.

\begin{definition}\label{definition_solvable_coordinate}
Let $Y_1,...,Y_r$ be a set of regular vector fields on $M$ which are generators of a solvable Lie algebra $\mathcal{G}$. We say that $Y_1,...,Y_r$ are in \emph{canonical  form} if there are $i_1,...,i_l$ such that $i_1+...+i_l=r$ and
$$(Y_1|...|Y_r)=\left(\begin{array}{c|c|c|c}
I_{i_1} & G^1_1(x) & ... & G^1_l(x) \\
\hline
0 & I_{i_2} & ... & G^2_l(x)\\
\hline
\vdots & \ddots & \ddots & \vdots \\
0 & 0 & ... & I_{i_l}\\
\hline
 0 & 0 & 0 & 0 \end{array} \right), $$
where  $G^h_k:M \rightarrow \matr(i_h,i_k)$ are smooth functions.
\end{definition}

\begin{theorem}\label{theorem_solvable_coordinate}
Let $\mathcal{G}$ be an $r$-dimensional solvable Lie algebra on $M$ such that $\mathcal{G}$ has constant dimension $r$ as a  distribution of $TM$ and let $\Psi$ be a reduction map for $\mathcal{G}$. Then, for any  $x_0 \in M$, there is a set of generators $Y_1,...,Y_r$  of $\mathcal{G}$ and a local diffeomorphism $\Phi:U(x_0) \rightarrow \tilde{M}$ of the form
\begin{equation}\label{equationPhi}
\Phi=\left( \begin{array}{c}
\tilde{\Phi}\\
\Psi \end{array}\right),
\end{equation}
such that $\Phi_*(Y_1),...,\Phi_*(Y_r)$ are generators in canonical form for $\Phi_*(\mathcal{G})$.
\end{theorem}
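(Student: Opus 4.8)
The plan is to argue by induction on $r=\dim\mathcal{G}$, peeling off the abelian quotient at the top of the derived series and exploiting the given reduction map $\Psi$ for the transverse directions. The starting observation is that, since $\Psi$ is a reduction map for $\mathcal{G}$, every $Y\in\mathcal{G}$ satisfies $\nabla(\Psi)\cdot Y=0$, so in any coordinate system of the form \refeqn{equationPhi} the pushforwards $\Phi_*(Y)$ have vanishing components along the $\Psi$-directions. This already accounts for the bottom zero row-block of size $n-r$ in Definition \ref{definition_solvable_coordinate}, and reduces the problem to producing, within the remaining $r$ ``leaf'' directions $\tilde\Phi$, a basis of $\mathcal{G}$ whose coefficient matrix is block upper triangular with identity diagonal blocks.

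For the base case I take $\mathcal{G}$ abelian. Here the regular generators $Y_1,\dots,Y_r$ commute, hence have commuting flows, and each flow preserves the level sets of $\Psi$ because $Y_a(\Psi)=0$. Choosing a local section $\sigma$ of $\Psi$ and defining $\Phi^{-1}(\theta,z)$ as the composition of the flows of $Y_1,\dots,Y_r$ for times $\theta^1,\dots,\theta^r$ started at $\sigma(z)$ yields coordinates $(\theta,z)=(\tilde\Phi,\Psi)$ in which $\Phi_*(Y_a)=\partial_{\theta^a}$; this is the canonical form with a single block $I_r$. The same parametrized straightening, now with the transverse parameter playing the role of $z$, will be reused inside the inductive step.

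For the inductive step I set $\mathcal{H}=[\mathcal{G},\mathcal{G}]$, an ideal of $\mathcal{G}$ with abelian quotient and $\dim\mathcal{H}=s<r$ (strict because a nonzero solvable algebra is not perfect). Regularity of the generators of $\mathcal{G}$ forces $\mathcal{H}$ to have constant rank $s$ with regular generators, so $\mathcal{H}$ is an integrable distribution to which Proposition \ref{proposition_reduction} applies. I would first build a reduction map for $\mathcal{H}$ compatible with $\Psi$: starting from any Frobenius reduction map for $\mathcal{H}$, the inclusion $\mathcal{H}\subset\mathcal{G}=\ker\nabla(\Psi)$ lets me factor $\Psi$ through it via Proposition \ref{lemma_submersion}, and after reparametrizing I obtain a reduction map $\Xi=(\Theta,\Psi)$ for $\mathcal{H}$, with $\Theta$ valued in $\mathbb{R}^{\,r-s}$. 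Lifting a basis of $\mathcal{G}/\mathcal{H}$ to $Y_1,\dots,Y_{r-s}\in\mathcal{G}$, the pushforwards $\Xi_*(Y_a)$ are well defined (as $\mathcal{H}$ is an ideal) by Proposition \ref{lemma_submersion}, point only in the $\Theta$-directions, and commute because $[Y_a,Y_b]\in\mathcal{H}$; applying the abelian case to these on the quotient I may assume, after adjusting $\Theta$, that $\Xi_*(Y_a)=\partial_{\theta^a}$.

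Finally I apply the inductive hypothesis to the pair $(\mathcal{H},\Xi)$, obtaining a basis $Z_1,\dots,Z_s$ of $\mathcal{H}$ and a diffeomorphism $\Phi_{\mathcal{H}}=(\hat\Phi,\Xi)=(\hat\Phi,\Theta,\Psi)$ putting $\mathcal{H}$ in canonical form in the coordinates $\hat y=\hat\Phi$. Setting $\Phi=(\hat\Phi,\Theta,\Psi)$, which is of the required form \refeqn{equationPhi} with $\tilde\Phi=(\hat\Phi,\Theta)$, I claim the ordered basis $(Z_1,\dots,Z_s,Y_1,\dots,Y_{r-s})$ is canonical: the columns $Z_j$ carry the inductive block-triangular pattern in the $\hat y$-rows and vanish in the $\theta$-rows (since $Z_j(\Theta)=0$), giving the zero below-diagonal block, while $\Phi_*(Y_a)=\partial_{\theta^a}+\sum_j c^j_a\,\partial_{\hat y^j}$ places the identity block $I_{r-s}$ on the $\theta$-diagonal and an arbitrary smooth block in the $\hat y$-rows above it. This is exactly the matrix of Definition \ref{definition_solvable_coordinate} with the new block placed last. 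The genuine work, and the main obstacle, lies in the \emph{relative} straightening results used twice --- simultaneously straightening commuting vector fields while holding $\Psi$ (respectively $\Theta$ and $\Psi$) fixed as coordinates --- and in checking that all the intermediate distributions keep constant rank and that the successive reduction maps can be chosen to refine one another; the algebraic skeleton (the derived series) and the descent statement of Proposition \ref{lemma_submersion} do the conceptual work.
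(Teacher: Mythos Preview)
Your argument is correct and rests on the same structural ingredient as the paper --- the derived series filtration $\mathcal{G}=\mathcal{G}^{(0)}\supset\mathcal{G}^{(1)}\supset\cdots\supset\mathcal{G}^{(l)}\supset 0$ --- but the organization is genuinely different. The paper gives a one--shot construction: it chooses the adapted basis $Y_1,\dots,Y_r$ compatible with the full derived tower, picks a local section $S$ of $\Psi$, defines $F(a_1,\dots,a_r,x')=\Phi^1_{a_1}\circ\cdots\circ\Phi^r_{a_r}(S(x'))$ and sets $\Phi=F^{-1}$. The canonical form then follows (after some unwinding the paper leaves to the reader) from the fact that each $\mathcal{G}^{(k)}$ is an ideal in $\mathcal{G}$, so pushing $Y_j$ forward through the earlier flows only perturbs it within the preceding blocks. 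Your version instead peels off one layer $\mathcal{G}/[\mathcal{G},\mathcal{G}]$ at a time: you produce a refined reduction map $\Xi=(\Theta,\Psi)$ for $\mathcal{H}=[\mathcal{G},\mathcal{G}]$, straighten the abelian quotient on the $\Xi$--base, and recurse. What your approach buys is transparency --- the block--triangular shape is forced by the coordinate split $(\hat\Phi,\Theta,\Psi)$ at each stage, and the ``relative straightening'' steps you flag are just the abelian base case applied on successive quotients --- at the price of managing the tower of intermediate reduction maps. The paper's approach is shorter to state but hides the verification of Definition~\ref{definition_solvable_coordinate} inside ``it is easy to prove''; your induction makes that verification explicit.
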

\begin{proof}
Since $\mathcal{G}$ is solvable, denoting by  $\mathcal{G}^{(0)}=\mathcal{G}$ and $\mathcal{G}^{(i+1)}=[\mathcal{G}^{(i)},\mathcal{G}^{(i)}]$, there exists  $l\geq 0$ such that $\mathcal{G}^{(l)}\not =0$ and $\mathcal{G}^{(l+1)}=\{0\}$. Let  $Y_1,...,Y_{i_1}$ be the  generators of $\mathcal{G}^{(l)}$, $ \ Y_{i_1+1},...,Y_{i_1+i_2}$ be the  generators of $\mathcal{G}^{(l-1)} \setminus \mathcal{G}^{(l)}$ and, in general, $Y_{i_1+...+i_{k-1}+1},...,Y_{i_1+...+i_k}$ be the generators of $\mathcal{G}^{(l-k+1)}\setminus \mathcal{G}^{(l-k)}$.
Since $(M,\Psi, M')$ is a fibred manifold, for any  $x_0 \in M$ we can consider a local  smooth section $S:V(\Psi(x_0)) \rightarrow M$
defined in $V(\Psi(x_0))$ and we can construct a local diffeomorphism on  $W \times V$ (where $W \subset \mathbb{R}^r$)  transporting $S(x_0)$ along the flows $\Phi^i_{a_i}$ of the vector fields  $Y_i$. In particular, considering the function $F:W \times V \rightarrow M$ (where $W$ is a neighborhood of $0$ in $\mathbb{R}^r$) defined by
$$F(a_1,...,a_r,x'^1,...x'^{n-r})=\Phi^1_{a_1}(...(\Phi^r_{a_r}(S(x'^1,...,x'^r)))...)$$
we can define  $\Phi=F^{-1}$. Indeed it is easy to prove that $F$ is a local diffeomorphism since $Y_1,...,Y_r$ form a regular set of vector fields, $S$ is a local section of the foliation  $(M,\Psi,M')$ and $\Psi$ is a reduction function for  $Y_1,...,Y_r$. Furthermore, since $F$ is obtained by composing the flows of $Y_1,...,Y_r$ in the natural order (i.e. respecting the solvable structure of $\mathcal{G}$), it is easy to prove that $\Phi_*(Y_1),...,\Phi_*(Y_r)$ are in canonical form.

${}\hfill$ \end{proof}

\begin{remark} In the particular case of a solvable connected Lie group $G$ acting  freely and regularly on $M$, Theorem \ref{theorem_solvable_coordinate} admits a global version. Indeed, under these hypotheses,  $\Phi$ can be defined in an open set $U$ of the form $U=\Psi^{-1}(V)$ where $V$ is an open set of $M'$ and $(M,\Psi, M')$ turns out to be a principal bundle with structure group $G$. So for a neighborhood $V$ of $\Psi(x_0)$ the set $U=\Psi^{-1}(V)$ is diffeomorphic to $V \times G$ and the generators $Y_1,...,Y_r$ of $G$ are vertical vector fields with respect to the bundle structure of $M$. Furthermore, it is possible to choose  a global coordinate system  $g^1,...,g^r$ on $G$ such that $Y_1,...,Y_r$ are in canonical form (see for example \cite{Onishchik}, Chapter 2, Section 3.1, Corollary 1) and equation \eqref{equationPhi} is given by  $\Phi=(g^1,...,g^r,\Psi^1,...,\Psi^{n-r})^T$. Obviously, if $(M,\Psi,M')$ is a trivial bundle, the diffeomorphism $\Phi$ of Theorem \ref{theorem_solvable_coordinate} can be defined globally.

\end{remark}

\section{Stochastic differential equations and their  symmetries}\label{section_weak_symmetry}

\subsection{Stochastic differential equations}

In the following,   given a filtration $\mathcal{F}_t \subset \mathcal{F} $, we consider only stochastic processes $(\Omega,\mathcal{F}, \mathcal{F}_t, \mathbb{P})$  which are  adapted with the  filtration $\mathcal{F}_t $.\\

\begin{definition}
 Let $\mu:M
\rightarrow \mathbb{R}^n$ and $\sigma:M \rightarrow \matr(n,m)$ be two smooth functions. A stochastic process $X$ on $M$ and an $m$-dimensional Brownian motion $W$ (in short the process $(X,W)$) solve (in the weak sense) the SDE with coefficients
$\mu,\sigma$ until the stopping time
$\tau$ (or  shortly solves the SDE $(\mu,\sigma)$) if for
any $t \in \mathbb{R}_+$
$$X^i_{t \wedge \tau}-X^i_0=\int_0^{t \wedge \tau}{\mu^i(X_s)ds}+\int_0^{t \wedge \tau}{\sum_{\alpha=1}^m\sigma_{\alpha}^i(X_s)dW^{\alpha}_s}.$$
If the process  $(X,W)$ solves the SDE $(\mu,\sigma)$ we write, as usual,
\begin{eqnarray*}
dX_t&=&\mu(X_t)dt+\sigma(X_t)\cdot dW_t\\
&=&\mu dt+\sigma \cdot dW_t.
\end{eqnarray*}
\end{definition}
The stopping time $\tau$ is supposed to be strictly less then the explosion time of the SDE. \\
It is well known that with a SDE $(\mu,\sigma)$  it is possible to
associate a second order differential operator
$$L=\sum_{\alpha=1}^m \sum_{i,j=1}^n\frac{1}{2}\sigma^i_{\alpha}\sigma^j_{\alpha}\partial_{ij}+\sum_{i=1}^n \mu^i \partial_i.$$
The operator $L$ is called the infinitesimal
generator of the process and appears, for example, in the
following  important formula.
\begin{proposition}[Ito formula]\label{Itoformula}
Let $(X,W)$ be a solution to  the SDE $(\mu,\sigma)$ and let $f:M \rightarrow
\mathbb{R}$ be a smooth function. Then  $F=f(X)$ satisfies
$$dF_t=L(f)(X_t)dt+\nabla(f)(X_t) \cdot \sigma(X_t) \cdot dW_t.$$
\end{proposition}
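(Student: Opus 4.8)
The plan is to derive this specialized formula from the classical multidimensional Itô formula for continuous semimartingales, the only genuine work being the bookkeeping that assembles the second-order term into the generator $L$. First I would regard $X$, up to the stopping time $\tau$ (which is below the explosion time), as a continuous semimartingale whose components decompose as $dX^i_t=\mu^i(X_t)\,dt+\sum_{\alpha=1}^m \sigma^i_\alpha(X_t)\,dW^\alpha_t$, so that the finite-variation part of $X^i$ is $\int_0^t \mu^i(X_s)\,ds$ and the local-martingale part is $\sum_\alpha \int_0^t \sigma^i_\alpha(X_s)\,dW^\alpha_s$.

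Next I would compute the quadratic covariations. Since the driving Brownian components satisfy $d[W^\alpha,W^\beta]_t=\delta_{\alpha\beta}\,dt$ while the bounded-variation drift contributes nothing (both $dt\cdot dt$ and $dt\cdot dW^\alpha_t$ vanishing), one obtains $d[X^i,X^j]_t=\sum_{\alpha=1}^m \sigma^i_\alpha(X_t)\sigma^j_\alpha(X_t)\,dt$. I would then apply the standard Itô formula for $f\in\cinf(M)$ along $X$, namely $df(X_t)=\sum_i \partial_i f(X_t)\,dX^i_t+\frac{1}{2}\sum_{i,j}\partial_{ij}f(X_t)\,d[X^i,X^j]_t$.

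Substituting the two decompositions and collecting the finite-variation terms yields $\left(\sum_i \mu^i \partial_i f+\frac{1}{2}\sum_{\alpha,i,j}\sigma^i_\alpha\sigma^j_\alpha\partial_{ij}f\right)(X_t)$, which is exactly $L(f)(X_t)$ by the definition of the generator $L$ given above, while the martingale terms collect to $\sum_{i,\alpha}\partial_i f(X_t)\,\sigma^i_\alpha(X_t)\,dW^\alpha_t=\nabla(f)(X_t)\cdot\sigma(X_t)\cdot dW_t$, using the matrix-product conventions fixed in Section \ref{section_preliminary}. This gives the claimed expression for $dF_t$.

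The substantive analytic content is not specific to this statement but resides entirely in the classical Itô formula itself: the almost-sure convergence of the second-order Riemann sums to the quadratic-variation integral, and the handling of the stopping time $\tau$ via localization so that $f$, its first and second derivatives, and the coefficients $\mu,\sigma$ stay bounded along the paths. I would treat this by a localizing sequence of stopping times together with the usual dominated-convergence limiting argument, or simply invoke a standard reference, since the novelty here is purely notational---the reorganization of the known formula into the compact form involving $L$ and $\nabla(f)\cdot\sigma$ that is used throughout the rest of the paper.
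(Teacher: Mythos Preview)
Your derivation is correct and is the standard way to obtain this specialized form of the It\^o formula. Note, however, that the paper does not actually supply a proof of this proposition: it is stated without proof as a classical result, serving only to fix notation (the generator $L$ and the row-vector convention $\nabla(f)\cdot\sigma$) for later use. Your write-up is therefore more detailed than what the paper provides, but it matches exactly the computation one would carry out to justify the stated formula.
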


\subsection{Stochastic transformations and  symmetries of SDEs}\label{subsection_weak_symmetry}

In this section we briefly recall the general definitions of finite and infinitesimal stochastic transformations of a SDE introduced  in \cite{DMU}, in order to provide a  general definition of symmetry, weak enough to include interesting examples in the stochastic framework.

\begin{definition}\label{stochastic_transformation}

Given a diffeomorphism $\Phi:M \rightarrow M'$ and two smooth functions  $B:M \rightarrow SO(m)$ and  $\eta:M \rightarrow \mathbb{R}_+$, the triad $T=(\Phi,B,\eta)$ is called a \emph{stochastic transformation} between $M$ and $M'$. Stochastic transformations of the form $(\Phi,I_m,1)$  are called \emph{strong}  whereas stochastic transformations of the form $(\Phi,B,1)$  are called \emph{quasi-strong}.
\end{definition}
Hereafter we identify the strong stochastic transformations $T=(\Phi,I_m,1)$  with the diffeomorphisms $\Phi$ and we denote by $S_m(M,M')$ the set of all stochastic transformations between $M$ and $M'$. \\
A stochastic transformation $T=(\Phi,B,\eta)$ naturally defines two transformations on  processes  and  SDEs.
Indeed, if $H_{\eta}$ denotes  the random time change given by the expression
$$t'=\int_0^t{\eta(X_s)ds}$$
and $(X,W)$ is
a process,   we define $P_T(X,W)=(X',W')$ where
\begin{eqnarray*}
X'=\Phi(H_{\eta}(X))\\
W'=H_{\eta}(\tilde{W}).
\end{eqnarray*}
and
$$\tilde{W}^{\alpha}_t=\int_0^t{\sum_{\beta=1}^m B^{\alpha}_{\beta}(X_s) dW^{\beta}_s}.$$
Moreover, given a SDE $(\mu, \sigma)$, we define $E_T(\mu,\sigma)=(\mu',\sigma')$ where
\begin{eqnarray*}
\mu'&=&\left(\frac{1}{\eta}L(\Phi)\right) \circ \Phi^{-1}\\
\sigma'&=&\left(\frac{1}{\sqrt{\eta}} \nabla(\Phi) \cdot \sigma \cdot B^T \right) \circ \Phi^{-1}.
\end{eqnarray*}
With these definitions it is easy to prove that if $(X,W)$ is a solution to $(\mu,\sigma)$, then $P_T(X,W)$ is a solution to $E_T(\mu,\sigma)$.\\
In \cite{DMU} we prove that the set $S_m(M)=S_m(M,M)$ is a group with  composition law defined by
$$T' \circ T=(\Phi' \circ \Phi,(B'\circ \Phi) \cdot B, (\eta' \circ \Phi) \eta)$$
and unit $1_M=(id_M,I_m, 1)$. Moreover, if we consider the  one-parameter group $T_a$ such that $T_a \circ T_b=T_{a+b}$ and $T_0=1_M$, there exist a vector field $Y$ and two smooth functions $C:M \rightarrow so(m)$ and $\tau:M \rightarrow \mathbb{R}$ such that \begin{equation}\label{equation_infinitesimal_SDE2}\begin{array}{lcr}
\partial_a(\Phi_a(x))&=&Y(\Phi_a(x))\\
\partial_a(B_a(x))&=&C(\Phi_a(x)) \cdot B_a(x)\\
\partial_a(\eta_a(x))&=&\tau(\Phi_a(x))\eta_a(x).
\end{array}
\end{equation}
\begin{definition}
 The triad $V=(Y,C,\tau)$ defined  by \eqref{equation_infinitesimal_SDE2} is called \emph{infinitesimal stochastic transformation} and the set of these triads is denoted by $V_m(M)$. The infinitesimal stochastic transformations of the form $(Y,0,0)$ are called \emph{strong},  whereas infinitesimal stochastic transformations of the form $(Y,C,0)$  are called \emph{quasi-strong}.
\end{definition}
In the following  we identify strong  infinitesimal stochastic transformations $V=(Y,0,0)$  with the vector fields $Y$.\\
The set of infinitesimal stochastic transformations turns out to be a Lie algebra with respect to the
 Lie bracket
\begin{eqnarray*}
[V_1,V_2]&=&([Y_1,Y_2],Y_1(C_2)-Y_2(C_1)-\{C_1,C_2\},Y_1(\tau_2)-Y_2(\tau_1)),
\end{eqnarray*}
where $\{C_1,C_2\}$ denotes the usual commutator between matrices.
Moreover the action of a finite stochastic transformation on an infinitesimal one  takes the form
\begin{eqnarray*}
T_*(V)&=&((\nabla(\Phi) \cdot Y) \circ \Phi ^{-1} ,(B \cdot C \cdot B^{-1}+Y(B) \cdot B^{-1}) \circ \Phi^{-1},\\
&&(\tau+Y(\eta)\eta^{-1})\circ \Phi^{-1}).
\end{eqnarray*}
The important role of strong infinitesimal stochastic transformations in reduction of SDEs makes the following theorem very useful for  applications.
\begin{theorem}\label{theorem_infinitesimal_SDE1}
 Let $K=\spann \{V_1,...,V_k\}$ be  a Lie algebra of infinitesimal stochastic transformations and  $V_i=(Y_i,C_i,\tau_i)$. If  $ x_0 \in M$  is such that $Y_1(x_0),...,Y_k(x_0)$ are linearly independent, there exist an
open neighborhood $U(x_0)$  and a stochastic transformation $T=(id_U,B,\eta) \in S_m(U)$
such that
$T_*(V_1),...,T_*(V_k)$ are strong infinitesimal stochastic
transformations. Furthermore the smooth functions
$B,\eta$ are solutions to the equations
\begin{eqnarray}
Y_i(B)&=&-B \cdot C_i \label{equation_weak_to_strong1}\\
Y_i(\eta)&=&-\tau_i \eta, \label{equation_weak_to_strong2}
\end{eqnarray}
for $i=1,...,k$.
\end{theorem}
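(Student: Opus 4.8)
The plan is to start from the action formula for $T_*(V)$ and specialise it to $T=(id_U,B,\eta)$. Since then $\Phi=id_U$ and $\nabla(\Phi)=I_n$, the three components of $T_*(V_i)$ become $Y_i$, the matrix $B\cdot C_i\cdot B^{-1}+Y_i(B)\cdot B^{-1}$ and the scalar $\tau_i+Y_i(\eta)\eta^{-1}$. Demanding that $T_*(V_i)$ be strong means requiring the last two to vanish; multiplying the first on the right by $B$ and the second by $\eta$ turns these requirements into exactly \eqref{equation_weak_to_strong1} and \eqref{equation_weak_to_strong2}. Thus the whole statement is equivalent to the \emph{local solvability} of this overdetermined first order system for the unknowns $B:U\to SO(m)$ and $\eta:U\to\mathbb{R}_+$, and the rest of the proof is devoted to producing such a solution near $x_0$.

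To solve the system I would lift it to the product manifold $\hat{M}=M\times SO(m)\times\mathbb{R}_+$, with projection $\pi:\hat{M}\to M$, and introduce the vector fields $\hat{Y}_i(x,b,e)=(Y_i(x),-b\cdot C_i(x),-\tau_i(x)e)$. The point is that a section $(B,\eta):U\to SO(m)\times\mathbb{R}_+$ solves the system precisely when its graph $\Sigma=\{(x,B(x),\eta(x))\}$ is an integral manifold of the distribution $\hat{\Delta}=\spann\{\hat{Y}_1,\dots,\hat{Y}_k\}$: tangency of $\hat{Y}_i$ to the graph reads $\nabla(B)\cdot Y_i=-B\cdot C_i$ and $Y_i(\eta)=-\tau_i\eta$, which are \eqref{equation_weak_to_strong1}--\eqref{equation_weak_to_strong2}. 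So it suffices to produce a leaf-like integral manifold of $\hat{\Delta}$ that is a graph over a neighbourhood of $x_0$.

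The key step, and the one I expect to be the main obstacle, is checking that $\hat{\Delta}$ is involutive, for this is exactly where the full Lie algebra structure of $K$ must be used. Writing $[V_i,V_j]=\sum_l c_{ij}^l V_l$ with real structure constants (the $Y_i(x_0)$ being independent, the $V_i$ form a basis of $K$), a component-by-component computation of $[\hat{Y}_i,\hat{Y}_j]$ yields: the $M$-part $[Y_i,Y_j]$; the $SO(m)$-part $-b\,(Y_i(C_j)-Y_j(C_i)-\{C_i,C_j\})$; and the $\mathbb{R}_+$-part $-(Y_i(\tau_j)-Y_j(\tau_i))e$. By the very formula for the bracket of infinitesimal stochastic transformations these three expressions equal $\sum_l c_{ij}^l Y_l$, $-b\sum_l c_{ij}^l C_l$ and $-\sum_l c_{ij}^l\tau_l e$, so that $[\hat{Y}_i,\hat{Y}_j]=\sum_l c_{ij}^l\hat{Y}_l\in\hat{\Delta}$. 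In other words involutivity of $\hat{\Delta}$ is equivalent to closedness of $K$ under the bracket, and it is here that the $C$- and $\tau$-parts of the bracket, together with the skew-symmetry $C_i\in so(m)$, genuinely enter.

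With involutivity in hand I would finish by Frobenius. Near $(x_0,I_m,1)$ the distribution $\hat{\Delta}$ has constant rank $k$ and is transverse to the fibres of $\pi$, with $d\pi$ mapping $\hat{\Delta}$ isomorphically onto $\Delta=\spann\{Y_1,\dots,Y_k\}$. Choosing an $(n-k)$-dimensional transversal $\Sigma_0$ to $\Delta$ through $x_0$ and imposing the initial data $B=I_m$, $\eta=1$ on it, the union of the leaves of $\hat{\Delta}$ through $\{(\sigma,I_m,1):\sigma\in\Sigma_0\}$ is an $n$-dimensional submanifold of $\hat{M}$ on which $d\pi$ is an isomorphism at the base point; hence it is the graph of a smooth $(B,\eta)$ on a neighbourhood $U$ of $x_0$, solving the system. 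It remains to check the target constraints: from $C_i\in so(m)$ and $Y_i(B)=-B\cdot C_i$ one gets $Y_i(BB^T)=0$, so $BB^T$ is constant along each leaf and equals $I_m$ because every leaf meets $\Sigma_0$, whence $B(x_0)=I_m$ and continuity give $B\in SO(m)$; and the linear equation $Y_i(\eta)=-\tau_i\eta$ with $\eta=1$ on $\Sigma_0$ keeps $\eta>0$. This produces the required $T=(id_U,B,\eta)\in S_m(U)$.
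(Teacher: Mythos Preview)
Your argument is correct, and in fact it supplies what the paper does not: the paper's proof of this theorem consists only of the line ``See \cite{DMU}'', so there is no in-paper argument to compare against. Your approach---lifting the overdetermined system to the product $\hat M=M\times SO(m)\times\mathbb{R}_+$, checking that the lifted distribution $\hat\Delta$ is involutive precisely because $K$ is closed under the stochastic bracket, and then applying Frobenius together with a choice of transversal to produce a graph section---is the standard and natural route, and it is carried out cleanly. The computation of the $SO(m)$- and $\mathbb{R}_+$-components of $[\hat Y_i,\hat Y_j]$ is correct and is exactly the place where the specific form of the Lie bracket on $V_m(M)$ is needed.

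Two minor remarks. First, since you lift to $M\times SO(m)\times\mathbb{R}_+$ from the outset, the vector $-b\,C_i$ is already tangent to $SO(m)$ at $b$ (because $C_i\in so(m)$) and $-\tau_i e$ is tangent to $\mathbb{R}_+$; so your final paragraph ``checking the target constraints'' is not strictly needed---the section is automatically $SO(m)\times\mathbb{R}_+$-valued. It is nonetheless a useful alternative viewpoint: one could equally well lift to $M\times GL(m)\times\mathbb{R}^*$ and then use your argument $Y_i(BB^T)=0$ to force $B\in SO(m)$ a posteriori. Second, you implicitly use that the $V_i$ are linearly independent in $K$ so that the structure constants $c_{ij}^l$ exist; this follows immediately from the linear independence of the $Y_i(x_0)$, but it is worth stating explicitly since the hypothesis only says the $V_i$ \emph{span} $K$.
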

\begin{proof}
See \cite{DMU}.
${}\hfill$ \end{proof}

In order to make the  definition of a symmetry of a SDE  more readable, we introduce the following notation: given $H \in TM$ and $K:M \rightarrow \matr(n,r)$,  we define the matrix-valued function $E:=\left[H,K\right]:M \rightarrow \matr(n,r)$ by
$$E^i_j=\sum_{k=1}^n(H^k\partial_k(K^i_j)-K^k_j \partial_k(H^i)).$$
It is easy to prove that the brackets $\left[\cdot,\cdot\right]$ generalize the standard Lie brackets for vector fields.
\begin{definition}
A stochastic transformation  $T$ is a \emph{(finite) symmetry} of a SDE $(\mu,\sigma)$ if
$$E_T(\mu,\sigma)=(\mu,\sigma).$$
An infinitesimal stochastic transformation is an \emph{(infinitesimal) symmetry } of the SDE $(\mu,\sigma)$ if the following \emph{determining equations} hold
\begin{eqnarray}\label{determining_eq}
[Y,\sigma]&=&-\frac{1}{2} \tau \sigma-\sigma \cdot C\\
Y(\mu)-L(Y)&=&-\tau \mu.
\end{eqnarray}
\end{definition}
It is easy to prove that  $T \in S_m(M)$ is a symmetry of the SDE $(\mu,\sigma)$ if and only if, for any solution $(X,W)$ to  $(\mu,\sigma)$,  also $P_T(X,W)$ is a solution to  $(\mu,\sigma)$. Furthermore if $V \in V_m(M)$ generates a one parameter group $T_a$, $V$ is an (infinitesimal) symmetry of the SDE $(\mu,\sigma)$ if and only if $T_a$ is a (finite) symmetry of  $(\mu,\sigma)$. Moreover the following result holds.
\begin{theorem}
Let  $V \in V_m(M)$ be a symmetry of the SDE $(\mu,\sigma)$. If $T \in S_m(M)$,  then $T_*(V)$  is a symmetry of $E_T(\mu,\sigma)$.
\end{theorem}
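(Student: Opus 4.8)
The plan is to lift the statement to the level of one-parameter groups and then reduce it to the group structure of $S_m(M)$. Recall the equivalence noted just above the theorem: an infinitesimal transformation is a symmetry of a SDE if and only if the one-parameter group it generates consists of finite symmetries. Since the determining equations \eqref{determining_eq} are pointwise conditions and $V$ generates a local flow $T_a$ (for $a$ near $0$) around every point, it suffices to prove the claim on such a neighborhood. I would rely on two structural properties. First, that $E$ is a left action of the group $S_m(M)$ on SDEs, namely
$$E_{T'\circ T}=E_{T'}\circ E_T, \qquad E_{1_M}=\mathrm{id}.$$
Second, that the assignment $V\mapsto T_*(V)$ is the differential of conjugation in $S_m(M)$: if $V$ generates $T_a$, then $T_*(V)$ generates the conjugate group $\widehat{T}_a:=T\circ T_a\circ T^{-1}$.

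Granting these, the argument is short. From $E_{1_M}=\mathrm{id}$ and functoriality one has $E_{T^{-1}}=(E_T)^{-1}$, whence
$$E_{\widehat{T}_a}\circ E_T=E_{T\circ T_a\circ T^{-1}}\circ E_T=E_T\circ E_{T_a}\circ E_{T^{-1}}\circ E_T=E_T\circ E_{T_a}.$$
Evaluating at $(\mu,\sigma)$ and using that $V$ is a symmetry, i.e. $E_{T_a}(\mu,\sigma)=(\mu,\sigma)$, gives
$$E_{\widehat{T}_a}\big(E_T(\mu,\sigma)\big)=E_T\big(E_{T_a}(\mu,\sigma)\big)=E_T(\mu,\sigma).$$
Hence each $\widehat{T}_a$ is a finite symmetry of $E_T(\mu,\sigma)$; since $\{\widehat{T}_a\}$ is precisely the one-parameter group generated by $T_*(V)$, the equivalence above shows that $T_*(V)$ is an infinitesimal symmetry of $E_T(\mu,\sigma)$, as claimed.

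The real content lies in the two structural properties, and I expect the second to be the main obstacle. Functoriality $E_{T'\circ T}=E_{T'}\circ E_T$ is most directly checked from the explicit formulas for $E_T$ together with the composition law $T'\circ T=(\Phi'\circ\Phi,(B'\circ\Phi)\cdot B,(\eta'\circ\Phi)\eta)$, and it is compatible with the relation $P_{T'\circ T}=P_{T'}\circ P_T$ on processes. The delicate point is to verify that the triad $T_*(V)$ defined by the stated formula coincides with $\partial_a(T\circ T_a\circ T^{-1})\big|_{a=0}$. Concretely, one computes
$$\widehat{T}_a=\Big(\Phi\circ\Phi_a\circ\Phi^{-1},\;(B\circ\Phi_a\circ\Phi^{-1})\cdot(B_a\circ\Phi^{-1})\cdot(B^T\circ\Phi^{-1}),\;(\eta\circ\Phi_a\circ\Phi^{-1})(\eta_a\circ\Phi^{-1})(\eta^{-1}\circ\Phi^{-1})\Big)$$
and differentiates each component at $a=0$, using $\Phi_0=\mathrm{id}$, $B_0=I_m$, $\eta_0=1$, the orthogonality $B^{-1}=B^T$, and the infinitesimal relations \eqref{equation_infinitesimal_SDE2}. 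The three derivatives then reproduce exactly $(\nabla(\Phi)\cdot Y)\circ\Phi^{-1}$, $(B\cdot C\cdot B^{-1}+Y(B)\cdot B^{-1})\circ\Phi^{-1}$ and $(\tau+Y(\eta)\eta^{-1})\circ\Phi^{-1}$, matching the definition of $T_*(V)$. This identification (which may also be invoked from \cite{DMU}) completes the proof.
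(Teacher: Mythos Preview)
The paper states this theorem without proof, implicitly deferring to the companion paper \cite{DMU} where the group structure on $S_m(M)$, the action $E_T$, and the pushforward $T_*$ are developed in detail. Your argument is correct and is precisely the natural one in that framework: once $E$ is known to be a left action of $S_m(M)$ on SDEs and $T_*(V)$ is identified with the infinitesimal generator of the conjugate one-parameter group $T\circ T_a\circ T^{-1}$, the result is the one-line conjugation computation you give. Your verification that differentiating the three components of $T\circ T_a\circ T^{-1}$ at $a=0$ reproduces the stated formula for $T_*(V)$ is complete; the handling of the local-flow issue via the pointwise nature of the determining equations is also correct. The only place where a reader might want one more line is the functoriality $E_{T'\circ T}=E_{T'}\circ E_T$ for the drift component, since $L$ is second order and one must check that the Ito correction terms compose correctly under $\Phi'\circ\Phi$ and the product $(\eta'\circ\Phi)\eta$; this is routine but slightly longer than the diffusion case you sketched.
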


\begin{corollary}\label{corollary_strong_symmetry}
Let $V_1=(Y_1,C_1,\tau_1),...,V_k=(Y_k,C_k,\tau_k)$ be infinitesimal symmetries of $(\mu,\sigma)$.
 If $x_0 \in M$ is such that $Y_1(x_0),...,Y_k(x_0)$ are linearly independent,  then there exist a neighborhood $U(x_0)$ and a stochastic transformation $T \in S_m(U,U')$ such that $T_*(V_i)$ are strong infinitesimal symmetries of $E_T(\mu,\sigma)$.
\end{corollary}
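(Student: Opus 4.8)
The plan is to obtain the corollary as a direct combination of the two results immediately preceding it, using the observation that a \emph{strong infinitesimal symmetry} is nothing but an infinitesimal stochastic transformation that is simultaneously strong (of the form $(Y,0,0)$) and a symmetry of the relevant SDE. Thus it suffices to produce a single transformation $T$ that renders all the $V_i$ strong and at the same time preserves the symmetry property; I would secure the first property from Theorem \ref{theorem_infinitesimal_SDE1} and the second from the theorem stated just above the corollary.

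For the first property, I would apply Theorem \ref{theorem_infinitesimal_SDE1} to $K=\spann\{V_1,\ldots,V_k\}$. The linear independence of $Y_1(x_0),\ldots,Y_k(x_0)$ is exactly the hypothesis we are given, so the theorem provides a neighborhood $U(x_0)$ and a stochastic transformation $T=(id_U,B,\eta)\in S_m(U)$, with $B,\eta$ solving the overdetermined system \eqref{equation_weak_to_strong1}--\eqref{equation_weak_to_strong2}, such that each $T_*(V_i)$ is a strong infinitesimal stochastic transformation. Note that, since the diffeomorphism component of $T$ is $id_U$, the target $U'$ coincides with $U$. For the second property, since each $V_i$ is by assumption an infinitesimal symmetry of $(\mu,\sigma)$ and $T\in S_m(U)$, the theorem preceding the corollary yields that $T_*(V_i)$ is an infinitesimal symmetry of $E_T(\mu,\sigma)$. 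Combining the two, each $T_*(V_i)$ is at once strong and a symmetry of $E_T(\mu,\sigma)$, i.e. a strong infinitesimal symmetry, which is the assertion.

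The formal argument is therefore short; the one point that must be checked before invoking Theorem \ref{theorem_infinitesimal_SDE1} is that $\spann\{V_1,\ldots,V_k\}$ is a Lie algebra of infinitesimal stochastic transformations. This is the real content and the step I expect to be the main obstacle: the simultaneous solvability of $Y_i(B)=-B\cdot C_i$ and $Y_i(\eta)=-\tau_i\eta$ for all $i$ is an overdetermined problem whose Frobenius compatibility conditions are guaranteed precisely by closure under the bracket $[\cdot,\cdot]$ introduced in Subsection \ref{subsection_weak_symmetry}. One should thus either take the $V_i$ to span a Lie algebra as part of the hypotheses, or first verify that the infinitesimal symmetries of a fixed SDE are closed under the bracket, a fact that can be read off directly from the determining equations \eqref{determining_eq}; once this is in place, the two cited results close the argument at once.
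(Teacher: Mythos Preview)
Your approach is exactly the one intended by the paper: the corollary is stated without proof precisely because it is meant to follow at once from Theorem~\ref{theorem_infinitesimal_SDE1} (producing a $T$ that makes the $V_i$ strong) together with the theorem immediately preceding the corollary (pushing the symmetry property through $T$). Your identification of the Lie-algebra hypothesis as the only nontrivial point is also on target; note, however, that your option (b) --- closure of the full symmetry set under the bracket --- does not by itself guarantee that the \emph{finite span} $\spann\{V_1,\ldots,V_k\}$ is a Lie subalgebra, so in practice one reads the corollary with the implicit standing assumption that the $V_i$ generate a Lie algebra (as is done throughout Section~\ref{section_reduction_reconstruction} and the examples).
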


\begin{remark}
The concept of strong symmetry has been introduced, with the simple name of symmetry, in \cite{Lazaro_Ortega}, where the noise is not necessarily a Brownian motion but any continuous semimartingale.  It is easy to prove that the results of this section hold if  we replace Brownian motion with any continuous semimartingale. Anyway in the following we restrict to  Brownian motion since  the weaker notion of   symmetry in \cite{DMU} has been introduced  only for  SDEs driven by Brownian motion.
\end{remark}

\section{Reduction and reconstruction }\label{section_reduction_reconstruction}

In this section we propose a generalization of some  well known  results of symmetry reduction for ODEs to the stochastic framework.  Moreover we provide suitable  conditions for a symmetry to be inherited by the reduced equation and we tackle the problem of the reconstruction of the solution to the original SDE starting from the knowledge of the solution to the reduced one.

\subsection{Reduction}

\begin{theorem}\label{theorem_reduction1}
Let $\{Y_1,...,Y_k\}$ be a set of reduction vector fields for the reduction map $\Psi:M \to M'$ such that $(Y_1,C_1,0),...,(Y_k,C_k,0)$ are  quasi-strong symmetries of the SDE $(\mu,\sigma)$. If $\nabla(\Psi) \cdot \sigma \cdot C_i =0$ ($\forall i=1,...,k$),
 there exists an unique SDE $(\mu',\sigma')$ on $M'$ such that
\begin{eqnarray*}
L(\Psi)&=&\mu'\circ \Psi\\
\nabla(\Psi) \cdot \sigma &=&\sigma' \circ \Psi.
\end{eqnarray*}
Furthermore if $(X,W)$ is a solution to  $(\mu,\sigma)$, then $(\Psi(X),W)$ is a solution to  $(\mu',\sigma')$.
\end{theorem}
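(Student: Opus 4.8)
The plan is to reduce the whole statement to the projection result of Proposition \ref{lemma_submersion}: once I show that the candidate coefficients $L(\Psi)$ and $\nabla(\Psi)\cdot\sigma$ are constant along each reduction field $Y_i$, that proposition produces the unique smooth projected functions $\mu'$ and $\sigma'$ on $M'$ (applied entrywise to the scalar functions $L(\Psi^a)$ and $(\nabla(\Psi)\cdot\sigma)^a_\alpha$, and using that $M$ is connected), while surjectivity of the reduction map $\Psi$ forces the two defining identities to pin $\mu',\sigma'$ down uniquely. The final dynamical claim is then immediate: applying Itô's formula (Proposition \ref{Itoformula}) to $\Psi$ along a solution $(X,W)$ of $(\mu,\sigma)$ gives $d(\Psi(X_t))=L(\Psi)(X_t)\,dt+(\nabla(\Psi)\cdot\sigma)(X_t)\cdot dW_t$, and substituting $L(\Psi)=\mu'\circ\Psi$ and $\nabla(\Psi)\cdot\sigma=\sigma'\circ\Psi$ shows that $(\Psi(X),W)$ solves $(\mu',\sigma')$ (there is no time change since $\eta\equiv1$ for quasi-strong symmetries). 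Hence the entire content lies in the two invariance statements $Y_i(L(\Psi))=0$ and $Y_i(\nabla(\Psi)\cdot\sigma)=0$.

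For the drift I would establish the cleaner fact that each quasi-strong symmetry commutes with the generator, $[Y_i,L]=0$ as differential operators. Expanding the commutator, the third-order terms cancel automatically; the second-order coefficient attached to the pair $(p,q)$ equals $\sum_\alpha([Y_i,\sigma]^p_\alpha\sigma^q_\alpha+\sigma^p_\alpha[Y_i,\sigma]^q_\alpha)$, and inserting the first determining equation $[Y_i,\sigma]=-\sigma\cdot C_i$ turns this into $-\big(\sigma\cdot(C_i+C_i^T)\cdot\sigma^T\big)^{pq}$, which vanishes because $C_i$ takes values in $so(m)$; the first-order coefficient is exactly $Y_i(\mu)-L(Y_i)$, which vanishes by the second determining equation with $\tau_i=0$. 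Granting $[Y_i,L]=0$ and using that $\Psi$ is a reduction map for the $Y_i$ (so $Y_i(\Psi^a)=0$), I obtain $Y_i(L(\Psi^a))=[Y_i,L](\Psi^a)+L(Y_i(\Psi^a))=0$, the invariance needed to project the drift.

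For the diffusion I would compute $Y_i(\nabla(\Psi)\cdot\sigma)$ directly. Differentiating the relation $Y_i(\Psi^a)=\sum_p\partial_p(\Psi^a)\,Y_i^p=0$ controls the mixed second derivatives of $\Psi^a$, and replacing $Y_i(\sigma^p_\alpha)$ by means of the first determining equation produces, after the expected index cancellation, precisely $Y_i(\nabla(\Psi)\cdot\sigma)=-\nabla(\Psi)\cdot\sigma\cdot C_i$. It is here that the structural hypothesis $\nabla(\Psi)\cdot\sigma\cdot C_i=0$ enters and delivers $Y_i(\nabla(\Psi)\cdot\sigma)=0$. With both invariances established, Proposition \ref{lemma_submersion} furnishes $\mu'$ and $\sigma'$ together with their uniqueness, completing the existence part, and Itô's formula completes the dynamical part as above.

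I expect the commutator identity $[Y_i,L]=0$ to be the main obstacle: one must carefully symmetrize and keep track of the second-order coefficients to recognize them as the Lie-derivative of the tensor $\sigma\sigma^T$ along $Y_i$, and it is exactly the skew-symmetry of $C_i$ that makes this vanish, which is why a genuinely quasi-strong (not merely strong) symmetry still projects the drift. The diffusion computation is shorter but is where the extra hypothesis $\nabla(\Psi)\cdot\sigma\cdot C_i=0$ is consumed; without it the drift would still project while $\sigma$ would fail to be projectable, which clarifies the asymmetric role of that assumption in the statement.
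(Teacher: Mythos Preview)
Your proposal is correct and follows essentially the same route as the paper: establish $Y_i$-invariance of $L(\Psi)$ and of $\nabla(\Psi)\cdot\sigma$, then invoke Proposition~\ref{lemma_submersion} and Ito's formula. The only cosmetic differences are that the paper packages your commutator computation $[Y_i,L]=0$ (and the companion identity $Y_i(\nabla f\cdot\sigma)=\nabla(Y_i f)\cdot\sigma-\nabla f\cdot\sigma\cdot C_i$) as a separate Lemma~\ref{lemma_quasi_strong}, and for the diffusion it applies the vector-field clause of Proposition~\ref{lemma_submersion} to the columns $\sigma_\alpha$ (noting $[Y_i,\sigma_\alpha]\in\ker\nabla(\Psi)$) rather than the scalar clause to the entries $(\nabla(\Psi)\cdot\sigma)^a_\alpha$; both routes are equivalent.
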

Before proving Theorem \ref{theorem_reduction1} we introduce the following lemma.
\begin{lemma}\label{lemma_quasi_strong}
If $(Y,C,\tau)$ is  an infinitesimal   symmetry of the SDE $(\mu,\sigma)$, for any smooth function $f$ we  have
\begin{eqnarray*}
\nabla(Y(f)) \cdot \sigma- Y(\nabla(f) \cdot \sigma)&=&\frac 12 \tau \nabla(f) \cdot \sigma+ \nabla(f) \cdot \sigma \cdot C.\\
L(Y(f))-Y(L(f))&=&\tau L(f).
\end{eqnarray*}
\end{lemma}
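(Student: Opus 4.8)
The plan is to read both identities as commutation relations and verify each by a coordinate computation, reducing it to one of the determining equations. For the first identity I would expand $\nabla(Y(f))\cdot\sigma$ and $Y(\nabla(f)\cdot\sigma)$ componentwise, writing $Y=Y^k\partial_k$ and $(\nabla(f)\cdot\sigma)_\alpha=\sum_i\partial_i f\,\sigma^i_\alpha$. The terms carrying second derivatives of $f$ cancel because mixed partials commute, so the difference is a genuine first-order operator acting on $f$. Matching its coefficients against the definition $E^i_j=\sum_k(H^k\partial_k K^i_j-K^k_j\partial_k H^i)$ of the generalized bracket, I expect to obtain exactly $\nabla(Y(f))\cdot\sigma-Y(\nabla(f)\cdot\sigma)=-\nabla(f)\cdot[Y,\sigma]$. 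Substituting the first determining equation $[Y,\sigma]=-\tfrac12\tau\sigma-\sigma\cdot C$ then yields the claimed right-hand side $\tfrac12\tau\,\nabla(f)\cdot\sigma+\nabla(f)\cdot\sigma\cdot C$.

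For the second identity I would compute the operator commutator $L(Y(f))-Y(L(f))$ directly, abbreviating $a^{ij}=\sum_\alpha\sigma^i_\alpha\sigma^j_\alpha$ so that $L=\tfrac12 a^{ij}\partial_{ij}+\mu^i\partial_i$. The third-order terms in $f$ cancel by symmetry of $a$ and of mixed partials, and the second-order terms built from $\mu$ cancel against one another. What survives are first-order terms together with the two families of second-order terms $\sum a^{ij}(\partial_j Y^k)\,\partial_{ik}f$ and $-\tfrac12\sum Y^l(\partial_l a^{ij})\,\partial_{ij}f$.

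The crux, and the step I expect to be the main obstacle, is handling these surviving second-order terms, which forces me to establish the auxiliary covariance identity $Y(a^{ij})=\sum_k(\partial_k Y^i)a^{kj}+\sum_k(\partial_k Y^j)a^{ik}-\tau a^{ij}$. I would derive it by applying $Y$ to $a^{ij}=\sum_\alpha\sigma^i_\alpha\sigma^j_\alpha$ via the Leibniz rule and substituting the first determining equation for each factor $Y(\sigma^i_\alpha)$. Here the contributions coming from the matrix $C$ collect into the combination $\sum_{\alpha,\beta}\sigma^i_\beta\sigma^j_\alpha(C^\beta_\alpha+C^\alpha_\beta)$, which vanishes precisely because $C$ takes values in $so(m)$ and is therefore antisymmetric; this is the structural reason why $C$ is absent from the second identity. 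Feeding this identity back in and exploiting the symmetry of $\partial_{ij}f$, the surviving second-order coefficient collapses to $\tfrac12\tau a^{ij}$, which is exactly the second-order part of $\tau L(f)$.

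Finally, I would collect the first-order terms, which assemble into $\sum_p\bigl(L(Y^p)-Y(\mu^p)\bigr)\partial_p f$; by the second determining equation $Y(\mu)-L(Y)=-\tau\mu$ read componentwise, this equals $\sum_p\tau\mu^p\,\partial_p f$, the first-order part of $\tau L(f)$. Since neither side carries a term proportional to $f$ itself, combining the matched second- and first-order parts gives $L(Y(f))-Y(L(f))=\tau L(f)$, completing the proof.
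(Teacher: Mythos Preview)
Your argument is correct. The first identity is exactly the computation $\nabla(Y(f))\cdot\sigma-Y(\nabla(f)\cdot\sigma)=-\nabla(f)\cdot[Y,\sigma]$ followed by substitution of the first determining equation; the second-derivative cancellation and the identification with the generalized bracket go through as you describe. For the second identity your bookkeeping is accurate: the third-order terms and the $\mu$--driven second-order terms cancel, the first-order remainder is $\sum_p\bigl(L(Y^p)-Y(\mu^p)\bigr)\partial_p f=\tau\,\mu^p\partial_p f$ by the second determining equation, and the covariance identity
\[
Y(a^{ij})=\sum_k(\partial_k Y^i)a^{kj}+\sum_k(\partial_k Y^j)a^{ik}-\tau\,a^{ij}
\]
follows from the Leibniz rule and the first determining equation, with the $C$-terms combining into $\sum_{\alpha,\beta}\sigma^i_\beta\sigma^j_\alpha(C^\beta_\alpha+C^\alpha_\beta)=0$ by antisymmetry of $C\in so(m)$. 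Feeding this back and symmetrizing yields the second-order part $\tfrac12\tau\,a^{ij}\partial_{ij}f$, completing the proof.

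As for the comparison: the paper does not prove this lemma here but defers to \cite{DMU}, so there is no in-paper argument to compare with. Your self-contained coordinate computation is the natural route and is essentially what one would expect the cited proof to contain.
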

\begin{proof}
See \cite{DMU}.
${}\hfill$ \end{proof}

\medskip

\noindent\begin{proof}[Proof of Theorem \ref{theorem_reduction1}]
Since we are considering quasi-strong symmetries, Lemma \ref{lemma_quasi_strong} ensures that $Y_i(L(\Psi))=L(Y_i(\Psi))$ and, being $Y_i\in
\ker \nabla(\Psi)$, we have $Y_i(L(\Psi))=0$. Hence Proposition \ref{lemma_submersion} guarantees the existence of a function $\mu'$ such that $L(\Psi)=\mu' \circ \Psi$.\\
Moreover, in the case of quasi-strong symmetries, equation \eqref{determining_eq} reduces to $[Y_i,\sigma]=-\sigma\cdot C_i$ and the hypothesis
$\nabla(\Psi) \cdot \sigma \cdot C_i =0$ ensures that $[Y_i,\sigma]\in \ker \nabla(\Psi)$. Hence, denoting by $\sigma_{\alpha}$  the $\alpha$ column of $\sigma$, we have
$$[Y_i,\sigma_{\alpha}] \in \spann\{Y_1,...,Y_k\}.$$
Therefore, by Proposition \ref{lemma_submersion}, there exists an unique vector field $\sigma'_{\alpha}$ on $M'$ such that
$$\sigma'_{\alpha} \circ \Psi =\nabla(\Psi)\cdot  \sigma_{\alpha}$$
and, considering  the matrix-valued function $\sigma'$ with columns $\sigma'_{\alpha}$,  the theorem is proved.\\
${}\hfill$ \end{proof}

\begin{remark}
If $Y_1,...,Y_k$ are strong symmetries of the SDE $(\mu,\sigma)$, conditions $\nabla (\Psi) \cdot \sigma \cdot C_i=0$  of Theorem \ref{theorem_reduction1}
 are automatically  satisfied. However in Section \ref{section_examples} we provide interesting examples of SDEs admitting only quasi-strong symmetries.
 Indeed a consequence of Theorem 2.10 in \cite{Zung} is that if $V_1=(Y_1,C_1,0),...,V_k=(Y_k,C_k,0)$ are quasi-strong symmetries of
 $(\mu,\sigma)$ and $Y_1,...,Y_k$ generate an integrable distribution of constant rank,  there exists  a (local)  stochastic transformation
 $T=(id_M,B,1)$ such that $T_*(V_1),...,T_*(V_k)$ satisfy the hypotheses of Theorem \ref{theorem_reduction1} for the transformed SDE $E_T(\mu,\sigma)$.
 We remark that, if the fibred manifold $(M,\Psi,M')$ is not a trivial fibred manifold, $T$ is only locally defined.
\end{remark}

\begin{theorem}\label{reduction_theorem2}
In the hypotheses and with the notations of Theorem \ref{theorem_reduction1}, let  $V=(Y,C,\tau)$ be a symmetry of the SDE $(\mu,\sigma)$ such that, for any $i=1,...,k$,
\begin{equation}\label{pippo}
[Y,Y_i] \in \spann\{ Y_1, \ldots, Y_k\}, \qquad Y_i(C)=0, \qquad Y_i(\tau)=0.
\end{equation}
Then the infinitesimal transformation $(Y',C',\tau')$ on $M'$, where  $Y'=\Psi_*(Y)$, $C' \circ \Psi=C$ and $\tau' \circ \Psi=\tau$, is a symmetry
of the SDE $(\mu',\sigma')$.
\end{theorem}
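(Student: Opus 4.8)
The plan is to first verify that the reduced triad $(Y',C',\tau')$ is well defined and then to check the determining equations \eqref{determining_eq} for $(\mu',\sigma')$ by transporting, along $\Psi$, the corresponding identities satisfied by $V=(Y,C,\tau)$ on $M$. For the well-definedness I would invoke Proposition \ref{lemma_submersion}. The first condition in \eqref{pippo}, namely $[Y,Y_i]\in\spann\{Y_1,\ldots,Y_k\}=\mathcal{G}$, gives $[Y,\mathcal{G}]\subset\mathcal{G}$, so there is a unique vector field $Y'$ on $M'$ with $\Psi_*(Y)=Y'\circ\Psi$; applying the function part of the same proposition entrywise to the conditions $Y_i(C)=0$ and $Y_i(\tau)=0$ produces unique $C'$ and $\tau'$ with $C=C'\circ\Psi$ and $\tau=\tau'\circ\Psi$. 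Since $\Psi$ is surjective and $C$ takes values in $so(m)$, so does $C'$, whence $(Y',C',\tau')\in V_m(M')$.

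The key intermediate step, which I expect to be the main obstacle, is the intertwining of the two generators: for every smooth $g'$ on $M'$ one has
$$Y(g'\circ\Psi)=\bigl(Y'(g')\bigr)\circ\Psi, \qquad L(g'\circ\Psi)=\bigl(L'(g')\bigr)\circ\Psi,$$
where $L'$ is the generator of $(\mu',\sigma')$. The first relation is merely the $\Psi$-relatedness of $Y$ and $Y'$. The second is a chain-rule computation: writing out $L(g'\circ\Psi)$, the second-order part collapses because $\sum_i\sigma^i_\alpha\partial_i\Psi^a=(\nabla(\Psi)\cdot\sigma)^a_\alpha=(\sigma'\circ\Psi)^a_\alpha$, and the leftover first-order terms reorganize through $L(\Psi^a)=(\mu')^a\circ\Psi$; both are exactly the defining relations of $(\mu',\sigma')$ furnished by Theorem \ref{theorem_reduction1}. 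This is the point where the second-order structure of $L$ and the reduction relations must be combined correctly.

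With these tools in hand I would apply Lemma \ref{lemma_quasi_strong} to $V=(Y,C,\tau)$ with $f=\Psi^j$ and assemble the components of $\Psi$, using $Y(\Psi)=\Psi_*(Y)=Y'\circ\Psi$, $L(\Psi)=\mu'\circ\Psi$, the intertwining above, and $C=C'\circ\Psi$, $\tau=\tau'\circ\Psi$. Recognizing that $\nabla(Y(\Psi))\cdot\sigma-Y(\nabla(\Psi)\cdot\sigma)=\bigl(\nabla(Y')\cdot\sigma'-Y'(\sigma')\bigr)\circ\Psi=-[Y',\sigma']\circ\Psi$, the first identity of the lemma becomes
$$-[Y',\sigma']\circ\Psi=\Bigl(\tfrac12\tau'\sigma'+\sigma'\cdot C'\Bigr)\circ\Psi,$$
while the second identity, after applying the generator intertwining to $g'=Y'$ and $\Psi$-relatedness to $\mu'$, becomes
$$\bigl(L'(Y')-Y'(\mu')\bigr)\circ\Psi=(\tau'\mu')\circ\Psi.$$
Since $\Psi$ is surjective these give precisely $[Y',\sigma']=-\tfrac12\tau'\sigma'-\sigma'\cdot C'$ and $Y'(\mu')-L'(Y')=-\tau'\mu'$, i.e. the determining equations for $(\mu',\sigma')$, so $(Y',C',\tau')$ is a symmetry. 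For the diffusion equation one may equivalently note that each column $\sigma_\alpha$ is $\Psi$-related to $\sigma'_\alpha$, so that $[Y,\sigma_\alpha]$ is $\Psi$-related to $[Y',\sigma'_\alpha]$, and push \eqref{determining_eq} forward by $\nabla(\Psi)$.
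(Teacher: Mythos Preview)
Your proposal is correct and follows essentially the same route as the paper's proof: both apply Lemma \ref{lemma_quasi_strong} to the components $f=\Psi^j$, use the defining relations $L(\Psi)=\mu'\circ\Psi$ and $\nabla(\Psi)\cdot\sigma=\sigma'\circ\Psi$ from Theorem \ref{theorem_reduction1} together with the $\Psi$-relatedness of $Y$ and $Y'$, and conclude via surjectivity of $\Psi$. Your version is in fact slightly more complete, since you spell out the well-definedness of $(Y',C',\tau')$ via Proposition \ref{lemma_submersion} and the generator intertwining $L(g'\circ\Psi)=(L'(g'))\circ\Psi$ needed for the drift equation, whereas the paper treats the drift case only by analogy.
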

\begin{proof}
We prove in detail that $Y'$ satisfies the determining equation \eqref{determining_eq} for $\sigma'$. The proof for $\mu'$ can be easily obtained
in a similar way.\\
Given  $f \in \cinf(M)$  such that $Y_i(f)=0$,   Proposition \ref{lemma_submersion} ensures that there exists a function $f' \in \cinf(M')$ such
that $f=f' \circ \Psi$. Moreover we have
\begin{equation}\label{equation_other_symmetry1}
\begin{array}{ccc}
Y(f)&=&(Y'(f')) \circ \Psi\\
\nabla(f) \cdot \sigma&=&(\nabla'(f') \cdot \sigma') \circ \Psi,
\end{array}
\end{equation}
where $\nabla'$ denotes the differential  with respect to  the  coordinates $x'^i$ of $M'$.\\
The determining equations \eqref{determining_eq} are equivalent to the relations
$$Y'(\sigma'^i)-\nabla'(Y'^i) \cdot \sigma'=-\frac{1}{2}\tau'\sigma'^i -\sigma'^i \cdot C' .$$
Since $(Y,C,\tau)$ is a symmetry of the SDE $(\mu,\sigma)$, by Lemma \ref{lemma_quasi_strong} for any  smooth function $f$ we have
\begin{equation}\label{equation_other_symmetry2}
Y(\nabla(f) \cdot \sigma)-\nabla(Y(f)) \cdot \sigma=-\frac{1}{2}\tau \nabla(f) \cdot \sigma-\nabla(f) \cdot \sigma \cdot C.
\end{equation}
Applying equations \refeqn{equation_other_symmetry1} and \refeqn{equation_other_symmetry2} we obtain
\begin{eqnarray*}
\left\{Y'(\sigma'^i)-\nabla'(Y'^i) \cdot \sigma'\right\} \circ \Psi&=&\left\{Y'(\nabla'(x'^i) \cdot \sigma')-\nabla'(Y'(x'^i)) \cdot \sigma'\right\} \circ \Psi\\
&=&Y((\nabla'(x'^i) \cdot \sigma') \circ \Psi)-\nabla(Y'(x'^i) \circ \Psi) \cdot \sigma\\
&=&Y(\nabla(\Psi^i) \cdot \sigma)-\nabla(Y(\Psi^i)) \cdot \sigma\\
&=&-\frac{1}{2} \tau (\nabla(\Psi^i) \cdot \sigma) - \nabla(\Psi^i) \cdot \sigma \cdot C\\
&=&\left(-\frac{1}{2} \tau' \sigma'^i- \sigma'^i \cdot C' \right) \circ \Psi.
\end{eqnarray*}
Since $\Psi$ is surjective, the thesis follows.\\
${}\hfill$ \end{proof}

\begin{remark}
If $V_1=(Y_1,0,0),...,V_k=(Y_k,0,0)$ are strong symmetries, conditions \eqref{pippo} of Theorem \ref{reduction_theorem2} on $V=(Y,C,\tau)$ can be rewritten as
\begin{equation}\label{equation_commutation}
[V,V_i]=\sum_{j=1}^k \lambda^i_j(x) V_j,
\end{equation}
where  $\lambda_i^j(x) $ are  smooth functions (in the particular case of $V_1,...,V_k,V$ generating a finite dimensional Lie algebra, $\lambda^i_j$ are  constants).
In general, if $V_i$ are quasi-strong symmetries satisfying the hypotheses of Theorem \ref{theorem_reduction1}, it is possible to prove an analogous
of Theorem \ref{reduction_theorem2} using only hypotheses \eqref{equation_commutation} but $C'$ in the reduced symmetry $(Y',C',\tau')$ satisfies $C'\circ \Psi=C+\tilde C$ where $\tilde{C}$ is such that
 $Y_i(C+\tilde{C})=0$ for $i=1,...,k$ and $\nabla(\Psi) \cdot \sigma \cdot \tilde{C}=0$.
\end{remark}

\subsection{Reconstruction }\label{reduction_subsection}

In this section we discuss the problem of reconstructing a process starting from the knowledge of the  reduced one.
In order to do this we need the following definition mainly  inspired by ODEs framework.

\begin{definition}\label{definition_reconstruction}
Let $X$ and $Z$ be two  processes on $M$ and $M'$ respectively. We say that $X$ can be reconstructed from $Z$ until the stopping time $\tau$
if there exists a smooth function $F:\mathbb{R}^{k(m+1)} \times M' \times M \rightarrow M$ such that
$$X^{\tau}_t=F\left(\int_0^{t \wedge \tau}{f_0(s,Z_s)ds},\int_0^{t \wedge \tau}{f_1(s,Z_s)dW^1_s},...,\int_0^{t \wedge \tau}{f_m(s,Z_s)dW^m_s},\
Z_{t \wedge \tau}  , \ X_0\right)$$
where    $W^1,...,W^m$ are Brownian motions and   $ f_i:\mathbb{R} \times M' \rightarrow \mathbb{R}^k$ are smooth functions.
The process $X$ can be progressively reconstructed from $Z$ until the stopping time $\tau$ if there are some real processes $Z^1,...,Z^r$, such that every $Z^i$ can be reconstructed until the stopping time $\tau$ starting from the process $(Z^1,...,Z^{i-1},Z)$, and $X$ can be reconstructed until the stopping time $\tau$ from the process $(Z^1,...,Z^r,Z)$.
\end{definition}
We remark  that Definition \ref{definition_reconstruction} is general enough for our purposes (as we only consider Brownian motion driven SDEs), but can be easily generalized
to include integration with respect more general stochastic processes.

\begin{theorem}\label{theorem_solvable}
Let $\{Y_1,...,Y_r\}$ be a set of regular reduction vector fields for the reduction map $\Psi:M \to M'$ such that $Y_1,...,Y_r$ generate a solvable algebra of  strong symmetries for the SDE $(\mu,\sigma)$
 and let $X^x$ be  the unique solution to the
 SDE $(\mu,\sigma)$ with Brownian motion $W$ such that $X^x_0=x$ almost surely. Then,
 for any $x \in M$, there exists a stopping time $\tau_x$ almost surely positive such that the process $X^x$ can be progressively reconstructed from $\Psi(X^x)$.
\end{theorem}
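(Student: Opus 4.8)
The plan is to reconstruct $X^x$ one fibre coordinate at a time by climbing back up a tower of one--dimensional reductions built from the solvable structure of $\mathcal{G}=\spann\{Y_1,\dots,Y_r\}$, so that each climb amounts to a single stochastic quadrature.

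\textbf{Setup and localization.} Fix $x$. Since $\mathcal{G}$ is solvable I would first reorder the generators so that $\mathcal{G}_j:=\spann\{Y_1,\dots,Y_j\}$ is a subalgebra for every $j$ and $\mathcal{G}_{j-1}$ is an ideal of codimension one in $\mathcal{G}_j$; such a flag exists for any real solvable Lie algebra and is compatible with the block ordering produced by Theorem \ref{theorem_solvable_coordinate}. Using Theorem \ref{theorem_solvable_coordinate} (and Frobenius, Proposition \ref{proposition_reduction}) I pass to an adapted chart $U(x)$ with coordinates $(w^1,\dots,w^r,x'^1,\dots,x'^{n-r})$, $x'=\Psi$, in which the $Y_i$ are in canonical form. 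I then set $\tau_x$ to be the first exit time of $X^x$ from $U(x)$; since $X^x_0=x\in U(x)$ is an interior point and the paths are continuous, $\tau_x>0$ almost surely, and everything below is understood up to $\tau_x$.

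\textbf{The tower of reductions and descent of symmetries.} For each $j$ the fields $Y_1,\dots,Y_j$ are regular reduction vector fields for a local reduction map $\Psi_j:U(x)\to M'_j$, with $\Psi_r=\Psi$ and $M'_0=U(x)$, yielding a tower $U(x)=M'_0\to M'_1\to\cdots\to M'_r=M'$. By Theorem \ref{theorem_reduction1} (whose hypothesis $\nabla(\Psi_j)\cdot\sigma\cdot C_i=0$ is automatic here, the symmetries being strong so that $C_i=0$) each level carries a reduced SDE $(\mu_j,\sigma_j)$ solved by $\Psi_j(X^x)$. Because $\mathcal{G}_{j-1}$ is an ideal of $\mathcal{G}_j$, the strong symmetry $V_j=(Y_j,0,0)$ satisfies the conditions \eqref{pippo} of Theorem \ref{reduction_theorem2} for the reduction by $Y_1,\dots,Y_{j-1}$, so it descends to a strong symmetry $Y_j'=(\Psi_{j-1})_*(Y_j)$ of $(\mu_{j-1},\sigma_{j-1})$. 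Moreover $M'_{j-1}\to M'_j$ is exactly the reduction of $(\mu_{j-1},\sigma_{j-1})$ by the single strong symmetry $Y_j'$, and by the uniqueness in Theorem \ref{theorem_reduction1} the SDE so obtained coincides with $(\mu_j,\sigma_j)$.

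\textbf{Quadrature at each step and composition.} The base step is the abelian one--generator case: straightening the regular strong symmetry $Y_j'$ to $\partial_{w^j}$, the determining equations with $C=\tau=0$ force $\mu_{j-1}$ and $\sigma_{j-1}$ to be independent of $w^j$, hence to be functions of the $M'_j$--coordinates alone (Proposition \ref{lemma_submersion}). Writing $\Psi_{j-1}(X^x)=(w^j,\Psi_j(X^x))$ in these coordinates, the $w^j$--component therefore satisfies $dw^j=\mu_{j-1}^{w^j}(\Psi_j(X^x))\,dt+\sum_\alpha\sigma_{j-1}^{w^j,\alpha}(\Psi_j(X^x))\,dW^\alpha$, so $w^j$ is an explicit sum of Lebesgue and It\^o integrals of functions of the already reconstructed process $\Psi_j(X^x)$; this is precisely a reconstruction of $\Psi_{j-1}(X^x)$ from $\Psi_j(X^x)$ in the sense of Definition \ref{definition_reconstruction}. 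Performing these quadratures for $j=r,r-1,\dots,1$ recovers in turn $\Psi_{r-1}(X^x),\dots,\Psi_0(X^x)=X^x$, and setting $Z^{\,r+1-j}:=w^j$ exhibits exactly the progressive reconstruction required.

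\textbf{Main obstacle.} The delicate part is not the final quadrature but the bookkeeping that makes the induction legitimate: producing the flag of ideals compatibly with regular, connected--fibre reduction maps on a single chart, checking that $Y_j$ really descends to a \emph{regular} strong symmetry at each level (so that Theorem \ref{reduction_theorem2} and then the single--generator reduction both apply), and verifying that the reduced SDEs built level by level are mutually consistent through the uniqueness in Theorem \ref{theorem_reduction1}. Keeping all of this inside the fixed chart up to $\tau_x$, and matching the resulting iterated integrals to the precise form of $F$ and of the functions $f_i$ in Definition \ref{definition_reconstruction}, is where the genuine care is needed.
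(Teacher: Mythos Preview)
Your argument is correct and reaches the same conclusion as the paper's, but the route is somewhat different. The paper's proof is shorter: after passing to the canonical chart of Theorem~\ref{theorem_solvable_coordinate}, it argues directly from the determining equations that the $j$-th row of the transformed coefficients $(\tilde\mu,\tilde\sigma)$ is independent of $\tilde x^1,\dots,\tilde x^j$, so the system is triangular and one integrates $\tilde X^r,\tilde X^{r-1},\dots,\tilde X^1$ in that order by successive quadratures, up to the exit time from the chart.

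You instead organise the same triangular structure as a tower of one--dimensional reductions $M'_0\to M'_1\to\cdots\to M'_r$ and invoke Theorems~\ref{theorem_reduction1} and~\ref{reduction_theorem2} at each level to descend both the SDE and the next symmetry $Y_j$. This is correct (in the canonical ordering of Theorem~\ref{theorem_solvable_coordinate} the partial spans $\mathcal G_j=\spann\{Y_1,\dots,Y_j\}$ do form the required chain of ideals, and in that chart $(\Psi_{j-1})_*(Y_j)$ is already $\partial_{w^j}$, so no further rectification is needed), but it is more machinery than the paper deploys. The paper's approach buys economy; yours makes the role of the symmetry--inheritance theorem in the reconstruction step more transparent, and would generalise more readily if one wanted to avoid fixing a single global chart. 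Your identification of the ``main obstacle'' (consistency of the tower and matching to Definition~\ref{definition_reconstruction}) is accurate, and you handle it adequately.
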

\begin{proof}
Let $\Phi$ be the diffeomorphism given by Theorem \ref{theorem_solvable_coordinate}, defined in a neighborhood $U(x_0)$, and $T=(\Phi, I_m, 1)$.
If  $(\tilde{\mu},\tilde{\sigma})=E_T(\mu,\sigma)$ then   $\Phi_*(Y_1),...,\Phi_*(Y_r)$ are symmetries of $(\tilde{\mu},\tilde{\sigma})$
 in canonical form and, denoting by $\tilde{x}^i$ the coordinate system on $\tilde{M}=\Phi(U)$, we have
$$\frac{\partial}{\partial{\tilde x}_i}(\tilde{\mu}^j)=\frac{\partial}{\partial{\tilde x}_i}(\tilde{\sigma}^j_{\alpha})=0,$$
for $i \leq r$ and  $j \leq i$. This means that the $r$-th row of the SDE $(\tilde{\mu},\tilde{\sigma})$ does not depend on $\tilde{x}^1,...,\tilde{x}^r$, the $(r-1)$-th
row does not depend on $\tilde{x}^1,...,\tilde{x}^{r-1}$ and so on. Hence the process $\tilde{X}=\Phi(X^{x_0})$ can be progressively reconstructed
from $\Pi(\tilde{X})$, where $\Pi$ is the projection of $\tilde{M}$ on the last $n-r$ coordinates. Since by definition of $\tilde X$ and $
\Phi$ we have  $\Pi(\tilde{X})=\Pi(\Phi(X^{x_0}))=\Psi(X^{x_0})$, the process $\tilde{X}$ can be progressively reconstructed from $\Psi(X^{x_0})$.
Moreover, being $X^{x_0}=\Phi^{-1}(\tilde{X})$ until the process $X^{x_0}$ exits from the open set $U$ we have that $\tilde{X}^{x_0}$ can be progressively reconstructed from $\Psi(X)$ until the stopping time
$$\tau=\inf_{t \in \mathbb{R}_+}\{X^{x_0}_t \not \in U\},$$
that is almost surely positive since $U$ is a neighborhood of $x_0$.
${}\hfill$ \end{proof}

\begin{corollary}
 In the hypotheses  and with the notations of Theorem \ref{theorem_solvable} if the Lie algebra generated by $Y_1,...,Y_r$ is Abelian then $X^{x}$ can be reconstructed from $\Psi(X^{x})$.
\end{corollary}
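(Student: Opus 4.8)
The plan is to exploit the fact that the Abelian hypothesis collapses the canonical form of Theorem~\ref{theorem_solvable_coordinate} to its simplest shape, and that this in turn removes the triangular dependence that forced the reconstruction in Theorem~\ref{theorem_solvable} to be only progressive.

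First I would observe that, since $\mathcal{G}=\spann\{Y_1,\dots,Y_r\}$ is Abelian, the derived series satisfies $\mathcal{G}^{(1)}=[\mathcal{G},\mathcal{G}]=\{0\}$, so $l=0$ in the notation of the proof of Theorem~\ref{theorem_solvable_coordinate} and there is a single block $i_1=r$ in Definition~\ref{definition_solvable_coordinate}. Consequently the canonical form reduces to the single-block matrix $\left(I_r\,|\,0\right)^T$; that is, in the adapted coordinates $\tilde{x}$ supplied by the diffeomorphism $\Phi=(\tilde\Phi,\Psi)^T$, the generators become the coordinate vector fields $\Phi_*(Y_i)=\partial/\partial \tilde{x}^i$ for $i=1,\dots,r$.

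Next I would feed these constant coordinate vector fields into the strong-symmetry determining equations \eqref{determining_eq} for the transformed SDE $(\tilde\mu,\tilde\sigma)=E_T(\mu,\sigma)$ with $T=(\Phi,I_m,1)$. Because $\Phi_*(Y_i)$ has constant components, the bracket $[\Phi_*(Y_i),\tilde\sigma]$ collapses to $\partial_{\tilde{x}^i}\tilde\sigma$ and the term $L(\Phi_*(Y_i))$ vanishes, so with $C_i=0$, $\tau_i=0$ the determining equations give
\[
\frac{\partial \tilde\mu^j}{\partial \tilde{x}^i}=0,\qquad \frac{\partial \tilde\sigma^j_\alpha}{\partial \tilde{x}^i}=0,\qquad i=1,\dots,r,
\]
for \emph{all} $j$ and $\alpha$, whereas in the general solvable case only the triangular dependence established in the proof of Theorem~\ref{theorem_solvable} is available. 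Hence $\tilde\mu$ and $\tilde\sigma$ depend only on the base coordinates $\tilde{x}^{r+1},\dots,\tilde{x}^n$, which are exactly the components of $\Psi$.

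Finally I would reconstruct directly. Writing $\tilde X=\Phi(X^{x})$, the process $(\tilde X,W)$ solves $(\tilde\mu,\tilde\sigma)$, and with $Z=\Psi(X^{x})=(\tilde X^{r+1},\dots,\tilde X^n)$ the first $r$ rows of this SDE have coefficients depending only on $Z$; integrating them gives
\[
\tilde X^j_t=\tilde X^j_0+\int_0^t \tilde\mu^j(Z_s)\,ds+\sum_{\alpha=1}^m\int_0^t \tilde\sigma^j_\alpha(Z_s)\,dW^\alpha_s,\qquad j=1,\dots,r.
\]
No auxiliary processes $Z^1,\dots,Z^r$ are needed, so this is a genuine (non-progressive) reconstruction in the sense of Definition~\ref{definition_reconstruction}, with $k=r$, $f_0=(\tilde\mu^1,\dots,\tilde\mu^r)$, $f_\alpha=(\tilde\sigma^1_\alpha,\dots,\tilde\sigma^r_\alpha)$, and $F(p_0,\dots,p_m,z,x)=\Phi^{-1}\!\left(\tilde\Phi(x)+p_0+\sum_{\alpha=1}^m p_\alpha,\,z\right)$. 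Composing with $\Phi^{-1}$ recovers $X^{x}=\Phi^{-1}(\tilde X)$ up to the exit time from the neighborhood $U$, exactly as in Theorem~\ref{theorem_solvable}, which yields the claim. The only real work is the first step; once the commuting generators are straightened to coordinate vector fields, the full independence of the coefficients and the single-step integration follow immediately, so I expect no substantial obstacle beyond carefully matching the integral formula to Definition~\ref{definition_reconstruction}.
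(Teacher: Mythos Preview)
Your proof is correct and follows the same approach as the paper, which simply states in one line that for Abelian $\mathcal{G}$ the diffeomorphism $\Phi$ of Theorem~\ref{theorem_solvable_coordinate} rectifies $\mathcal{G}$. You have unpacked exactly what this rectification means (single block $I_r$ in the canonical form, hence $\Phi_*(Y_i)=\partial_{\tilde x^i}$), why it forces all rows of $(\tilde\mu,\tilde\sigma)$ to depend only on the base variables, and how the resulting one-step quadrature matches Definition~\ref{definition_reconstruction}; the only minor slip is calling the number of blocks ``$l=0$'' when in the notation of Definition~\ref{definition_solvable_coordinate} it is the single-block case $l=1$, but this is purely notational and does not affect the argument.
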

\begin{proof}
If $\mathcal{G}=\spann\{Y_1,...,Y_r\}$ is Abelian, the diffeomorphism $\Phi$ of Theorem \ref{theorem_solvable_coordinate} rectifies $\mathcal{G}$.
${}\hfill$ \end{proof}

\begin{remark}
In order to compare our results with the reconstruction method
 proposed in \cite{Lazaro_Ortega} we consider the case of $Y_1,...,Y_r$ generating a general  Lie group $G$ whose action is free and proper.
 In this case $(M,\Psi,M')$ is a principal bundle with structure group $G$ and locally diffeomorphic to $U=V \times G$, where $V$ is an open subset of $\mathbb{R}^{n-r}$.
 If we denote by $\bar{X}$ the reduced process and by $g$ the coordinates on $G$ we have that the process $\bar{G}$ in $U$ satisfies the following Stratonovich equation
\begin{equation}\label{equation_reconstruction}
d\bar{G}_t=\sum_{i=1}^r f_0^i(\bar{X}_t) Y_i(\bar G_t) dt+\sum_{\alpha=1}^m \sum_{i=1}^r f_{\alpha}^i(\bar{X}_t) Y_i(\bar G_t) \circ dW^{\alpha}_t,
\end{equation}
where $f^i_j$ are smooth real-valued functions. Despite the fact that  the knowledge of the reduced process $\bar{G}_t$ formally allows the  reconstruction of the initial  process $X_t$, for
 a general group $G$ this reconstruction cannot be reduced to quadratures.\\
 On the other hand, if
 $G$ is  solvable, it is possible to choose a set of global coordinates on $G$ which reduce equation \refeqn{equation_reconstruction} to
 integration by quadratures as required by Definition \ref{definition_reconstruction}.\\
\end{remark}
The following definition  generalizes to the stochastic framework the well known
definition of integrability for a system of ODEs.
\begin{definition}
A SDE $(\mu,\sigma)$ is  completely integrable (or simply integrable) if for any  $x \in M$ there exists an  almost surely positive stopping time $\tau_x>0$ such that the solution process $X^x$ can be progressively reconstructed until the stopping time $\tau_x$ from a deterministic process.\\
\end{definition}

\begin{theorem}\label{theorem_integrability}
Let $(\mu,\sigma)$ be a SDE on $M\subset \mathbb{R}^n$ admitting an $n$-dimensional solvable Lie algebra $\mathcal{G}$ of strong symmetries which are also a regular set of vector fields. Then  $(\mu,\sigma)$ is integrable.
\end{theorem}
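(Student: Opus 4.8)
The plan is to recognize Theorem \ref{theorem_integrability} as the degenerate case $r=n$ of the reconstruction result Theorem \ref{theorem_solvable}, in which the base manifold $M'$ collapses to a single point and the reduced process becomes deterministic. Integrability then follows at once from the definition.

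First I would verify that $Y_1,\dots,Y_n$ are regular reduction vector fields for a suitable reduction map. Because the $n$ fields are regular on the $n$-dimensional $M$, the vectors $Y_1(x),\dots,Y_n(x)$ are linearly independent and hence span all of $T_xM$ for every $x$; thus the distribution $\Delta=\spann\{Y_1,\dots,Y_n\}$ equals $TM$, has constant rank $r=n$, and is trivially integrable since $[Y_i,Y_j]\in TM=\Delta$. Restricting to a connected neighborhood of a fixed point, the constant map $\Psi:M\to M'$ onto a single point $M'\subset\mathbb{R}^{n-r}=\mathbb{R}^0$ is a surjective submersion (its differential maps onto the zero-dimensional tangent space) with connected level set $\Psi^{-1}(M')=M$, so it is a reduction map in the sense of Definition \ref{definition_reduction_symmetries}; moreover $\ker\nabla(\Psi)(x)=T_xM=\Delta(x)$, which makes $Y_1,\dots,Y_n$ reduction vector fields for $\Psi$.

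Then, since by hypothesis these fields generate a solvable Lie algebra of strong symmetries and form a regular set, all the hypotheses of Theorem \ref{theorem_solvable} are satisfied. Applying it, for every $x\in M$ there is an almost surely positive stopping time $\tau_x$ such that $X^x$ is progressively reconstructed from $\Psi(X^x)$ until $\tau_x$. The key point is that $M'$ is a single point, so $\Psi(X^x)$ is constant and therefore deterministic; hence $X^x$ is progressively reconstructed from a deterministic process, which is precisely the definition of integrability. Unwinding the proof of Theorem \ref{theorem_solvable}, this reconstruction is the explicit one coming from the canonical coordinates of Theorem \ref{theorem_solvable_coordinate}: after the rectifying diffeomorphism $\Phi$ (whose $\Psi$-block is now trivial, so $\Phi=\tilde\Phi$) the transformed coefficients $(\tilde\mu,\tilde\sigma)=E_T(\mu,\sigma)$ have $i$-th row independent of $\tilde x^1,\dots,\tilde x^i$ for $i\le n$, so the $n$-th component solves a constant-coefficient equation by a single quadrature and each lower-indexed component is then obtained by integrals depending only on the already reconstructed components.

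The main, though mild, obstacle I anticipate is confirming that the reduction-map apparatus and the flow-box construction of Theorem \ref{theorem_solvable_coordinate} remain valid in the degenerate case of a zero-dimensional base. I would handle this by observing that all those constructions are purely local and that, for $r=n$, the local section $S$ in the proof of Theorem \ref{theorem_solvable_coordinate} reduces to the single base point $x_0$, while the map obtained by composing the flows of $Y_1,\dots,Y_n$ started at $x_0$ is a local diffeomorphism exactly because the $Y_i$ form a regular set. No genuine difficulty arises beyond this bookkeeping.
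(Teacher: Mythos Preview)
Your proposal is correct and follows essentially the same approach as the paper: recognize the situation as the full-rank case $r=n$, take the constant map $\Psi$ as reduction map, and observe that the canonical coordinates of Theorem~\ref{theorem_solvable_coordinate} give a triangular SDE whose last row is constant, so the solution is progressively reconstructed from a constant (deterministic) process. The paper's proof is terser and argues the triangular structure directly rather than formally invoking Theorem~\ref{theorem_solvable}, but the content is the same.
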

\begin{proof}
 Since  $\mathcal{G}$  has the same dimension of $M$, the map $\Psi=0$, and the transformed SDE  $(\mu',\sigma')$
is such that  the first row of $\mu'$ and $\sigma'$ does not depend on $x'^1$, the second row does not depend on $x'^1,x'^2$ and so on. Therefore the $n$-th
row of $\mu'$ and $\sigma'$ does not depend on any variables $x'^1,...,x'^n$ and so it is constant. This means that the solution $X'=\Phi(X)$ can be progressively reconstructed from a constant process.
${}\hfill$ \end{proof}\\

\noindent We remark  that the hypotheses of Theorem \ref{theorem_integrability} are only sufficient and not necessary. For example the SDE
\begin{eqnarray*}
\left(\begin{array}{c}
dX_t\\
dZ_t\end{array} \right)&=& \left(\begin{array}{c}
0\\
g(X_t) \end{array} \right)+\left(\begin{array}{c}
1\\
f(X_t)\end{array} \right)dW_t,
\end{eqnarray*}
is obviously integrable, but it is easy to prove that (for general $g(x),f(x)$) it does not admit other symmetries than
$$V=\left(\left(\begin{array}{c}
0\\
1
\end{array}
\right), \left(\begin{array}{cc}
0 & 0 \\
0 & 0 \end{array}\right), 0\right).$$

\section{ Examples }\label{section_examples}

In this section we apply our general reduction procedure to some  explicit examples.
Following the line of previous discussion, given a SDE $(\mu, \sigma)$, we start by looking for   a solvable algebra of symmetries
$\mathcal{G}= \{ V_1, \ldots, V_r \}$ for $(\mu, \sigma)$. Hence we compute   a stochastic transformation $T=(\Phi, B,  \eta)$
transforming $ V_1, \ldots, V_r $ into strong symmetries $V'_k=(Y'_k, 0,0)$ for the transformed SDE $E_T(\mu, \sigma)$ such that the vector fields $Y'_1, \ldots , Y'_r$ are in canonical form. Finally we   use the results of Section
\ref{section_reduction_reconstruction} to reduce (or integrate) the transformed SDE $E_T(\mu, \sigma)$  and we  reconstruct the solution to
$(\mu, \sigma)$ by means of the inverse transformation $T^{-1}$.

\subsection{A class of one-dimensional Kolmogorov-Pearson diffusions}\label{subsection_linear}

We consider the following class of SDE within the Kolmogorov-Pearson type diffusions
\begin{equation}\label{equation_linear}
dX_t=(\lambda X_t + \nu) dt+ \sqrt{\alpha X^2_t+2\beta X_t+\gamma}  \ dW_t,
\end{equation}
where $\alpha,\beta,\gamma,\lambda,\nu \in \mathbb{R}$, $\alpha\ge 0$ and $\alpha \gamma -\beta^2 \geq 0$.  \\
For $\alpha =\beta =0$ the class  includes the Ornstein-Uhlenbeck process and for $\alpha \gamma -\beta ^2=0$ the important class of  one-dimensional general linear SDEs of the form
\begin{equation}\label{standardlinear}
dX_t=(\lambda X_t+\nu)+\left(\sqrt{\alpha}X_t+\frac{\beta}{\sqrt{\alpha}}\right) dW_t.
\end{equation}
 Beyond the large  number of applications of Ornstein-Uhlenbeck process and of linear SDEs and
their spatial transformations, the  Kolmogorov-Pearson class \eqref{equation_linear} has notable applications to finance (see \cite{Borland2002,Shaw2015}),
physics  (see \cite{Friedrich1997,Wilk2000}) and  biology (see \cite{Ghasemi2006}). Moreover,  there is a growing interest in the study of statistical inference
(see \cite{Forman2008}), in the analytical and spectral properties of the  Kolmogorov equation  associated with \eqref{equation_linear} (see \cite{Avram1}) and in the
development of efficient numerical algorithms for its numerical simulation (see \cite{Bormetti2014}). Finally the Kolmogorov-Pearson
diffusions are  examples of \virgolette{polynomial processes} that are becoming quite  popular in financial mathematics (\cite{polynomial_processes}).
For many particular values of the parameters $\alpha,\beta,\gamma,\lambda,\nu$ it is well known that equation \refeqn{equation_linear}  is an integrable SDE
(first of all in the
standard linear case corresponding to $\alpha \gamma-\beta^2=0$).
Anyway this integrability property cannot be directly related to the existence of strong symmetries as showed by  the following proposition.

\begin{proposition}\label{proposition_linear1}
The SDE $(\lambda x+\nu,\sqrt{\alpha x^2+2\beta x+\gamma})$ admits strong symmetries if and only if
\begin{eqnarray*}
2\beta \nu-2\gamma \lambda+\alpha \gamma-\beta^2&=&0\\
\alpha \nu-\beta \lambda&=&0.
\end{eqnarray*}
\end{proposition}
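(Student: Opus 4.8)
The plan is to write down the determining equations for a one-dimensional SDE and solve them explicitly. Since we are looking for \emph{strong} symmetries, the infinitesimal stochastic transformation has the form $V=(Y,0,0)$ with $C=0$ and $\tau=0$, so the determining equations \eqref{determining_eq} collapse to the system
\begin{eqnarray*}
[Y,\sigma]&=&0\\
Y(\mu)-L(Y)&=&0.
\end{eqnarray*}
In dimension one a vector field is $Y=\varphi(x)\partial_x$ for a single smooth function $\varphi$, the diffusion coefficient is $\sigma(x)=\sqrt{\alpha x^2+2\beta x+\gamma}$, and the drift is $\mu(x)=\lambda x+\nu$. The generator reduces to $L=\tfrac12\sigma^2\partial_{xx}+\mu\,\partial_x$. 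First I would expand the bracket $[Y,\sigma]=\varphi\sigma'-\sigma\varphi'$ (the scalar version of the $[\cdot,\cdot]$ notation defined before the symmetry definition) to get a first-order ODE for $\varphi$, and separately expand $Y(\mu)-L(Y)=\varphi\mu'-\mu\varphi'-\tfrac12\sigma^2\varphi''$ to get a second-order ODE.

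First I would solve the first equation $\varphi\sigma'=\sigma\varphi'$. This is separable: it forces $\varphi'/\varphi=\sigma'/\sigma$, hence $\varphi=c\,\sigma$ for a constant $c$ (on any interval where $\sigma>0$). So every strong symmetry must be a constant multiple of $Y=\sigma(x)\partial_x$, and the existence of a nontrivial strong symmetry is equivalent to this particular $\varphi=\sigma$ satisfying the second determining equation. Substituting $\varphi=\sigma$ into $\varphi\mu'-\mu\varphi'-\tfrac12\sigma^2\varphi''=0$ and using $\sigma^2=\alpha x^2+2\beta x+\gamma$, $\mu=\lambda x+\nu$, I would compute $\sigma'=(\alpha x+\beta)/\sigma$, $\sigma''=(\alpha\gamma-\beta^2)/\sigma^3$, and plug everything in. Clearing the common factor $\sigma$ and collecting powers of $x$ should reduce the condition to a polynomial identity in $x$ whose coefficients must all vanish.

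The main computational step, and the place where the two stated equations should emerge, is matching coefficients of the resulting polynomial. After substitution the condition becomes
\begin{equation*}
\sigma\lambda-(\lambda x+\nu)\frac{\alpha x+\beta}{\sigma}-\frac12\sigma^2\cdot\frac{\alpha\gamma-\beta^2}{\sigma^3}=0,
\end{equation*}
and multiplying through by $\sigma$ (equivalently by $\sigma^2$) turns this into a genuine polynomial equation in $x$ of degree two. I expect the $x^2$ term to cancel automatically (the leading $\alpha\lambda x^2$ cancels against $-\lambda\alpha x^2$), so that the vanishing of the coefficient of $x^1$ gives $\alpha\nu-\beta\lambda=0$ and the vanishing of the constant term gives $2\beta\nu-2\gamma\lambda+\alpha\gamma-\beta^2=0$, exactly the two asserted conditions. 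The converse is immediate: if both conditions hold, then $\varphi=\sigma$ solves both determining equations, so $Y=\sigma\partial_x$ is a genuine strong symmetry.

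The subtle point to handle carefully — and the only real obstacle — is the degenerate or boundary behaviour of $\sigma$. The separable argument giving $\varphi=c\sigma$ is valid only where $\sigma\neq 0$, and one must check that the conclusion extends across zeros of $\sigma^2=\alpha x^2+2\beta x+\gamma$ (and in the cases $\alpha=0$, where $\sigma''$ must be recomputed, or $\alpha\gamma-\beta^2=0$, where $\sigma$ is affine). Since we work on an open set $M\subset\mathbb{R}$ where $\sigma>0$, and symmetry vector fields are smooth, continuity lets us identify the unique candidate $\varphi=\sigma$ on the whole domain, so the polynomial-identity argument is unaffected; I would simply note that the matching of coefficients is a statement about polynomials and hence holds identically, independent of the interval chosen.
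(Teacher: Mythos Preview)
Your proposal is correct and follows essentially the same route as the paper: both solve the first determining equation to obtain $\varphi=c\,\sigma$ (the paper writes $Y^1=Y^1_0\sqrt{\alpha x^2+2\beta x+\gamma}$), then substitute into the second determining equation and read off the two polynomial constraints in $x$. The only differences are cosmetic---you work with the abbreviations $\sigma,\mu,\varphi$ and discuss the degenerate cases $\sigma=0$, $\alpha=0$ more explicitly, whereas the paper writes everything out in the original coefficients; the parenthetical ``(equivalently by $\sigma^2$)'' is a small slip (multiplying by $\sigma$ already yields a polynomial), but it does not affect the argument.
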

\begin{proof}
 The  determining equations \eqref{determining_eq} for a strong symmetry $V=(Y,0,0)$  of \refeqn{equation_linear}, with $Y=Y^1\partial_x$, are
\begin{eqnarray}
&\frac{(\alpha x+\beta)}{\sqrt{\alpha x^2+2\beta x+\gamma}}Y^1-\partial_x(Y^1)\sqrt{\alpha x^2+2\beta x+\gamma}=0&\label{equation_linear7}\\
&\lambda Y^1-\frac{(\alpha x^2+2\beta x+\gamma)}{2}\partial_{xx}(Y^1)-\partial_x(Y^1)(\lambda x+\nu)=0.&\label{equation_linear8}
\end{eqnarray}
Equation \refeqn{equation_linear7} is an ODE in $Y^1$ with solution
\begin{equation}\label{equation_linear9}
Y^1=Y^1_0\sqrt{\alpha x^2+2\beta x+\gamma},
\end{equation}
where $Y^1_0 \in \mathbb{R}$. Inserting the expression \refeqn{equation_linear9} in \refeqn{equation_linear8} we obtain
$$Y^1_0\frac{(2\beta \nu-2\gamma \lambda+\alpha \gamma -\beta^2)+2x(\alpha \nu-\beta \lambda)}{2\sqrt{\alpha x^2+2\beta x+\gamma}}=0$$
and this concludes the proof.
${}\hfill$\end{proof} \\
We remark that a standard linear SDE of the form \eqref{standardlinear} admits a symmetry if and only if
$\alpha \nu-\beta \lambda=0$. Therefore, in spite of their  integrability,  standard linear SDEs do not have,  in general, strong symmetries.\\
In order to apply  a symmetry approach  to the study of the integrability of  \refeqn{equation_linear}, we consider the  following two-dimensional system:
\begin{equation} \label{equation_linear2}
\left(\begin{array}{c}
dX_t\\
dZ_t\end{array}\right)=
\left(\begin{array}{c}
\lambda X_t+\nu\\
\lambda Z_t\end{array} \right)dt+\left(
\begin{array}{cc}
\sqrt{\alpha  X_t^2+2\beta  X_t+ \gamma} & 0\\
\frac{Z_t(\alpha X_t+\beta )}{\sqrt{\alpha X_t^2+2\beta X_t+\gamma}} & \frac{Z_t\sqrt{\alpha \gamma-\beta^2}}{\sqrt{\alpha X_t^2+2\beta X_t+\gamma}} \end{array}\right)
\left( \begin{array}{c}
dW^1_t \\
dW^2_t \end{array} \right),
\end{equation}
where $W^1_t:=W_t$. In the standard linear case,  system \eqref{equation_linear2} consists of  SDE \refeqn{standardlinear} and of the associated homogeneous one.
If we look for the symmetries  of  system (\ref{equation_linear2}) of the form  $V=(Y,C,\tau)$, where $Y=(Y^1,Y^2)$, $C=\left(\begin{array}{cc}
0 & c(x,z)\\
-c(x,z) & 0 \end{array}\right)$ and $\tau(x,z)$,  the determining equations are:
$$
\begin{array}{c}
\frac{(\alpha x+\beta)}{\sqrt{\alpha x^2+2\beta x+\gamma}}Y^1-\partial_x(Y^1)\sqrt{\alpha x^2+2\beta x+\gamma}-\partial_z(Y^1)\frac{z(\alpha x+\beta )}
{\sqrt{\alpha x^2+2\beta x+\gamma}}+\\
+\frac{\tau}{2} \sqrt{\alpha x^2+2\beta x+\gamma}=0
\end{array}
$$

$$
\begin{array}{c}
\frac{z(\alpha \gamma -\beta^2)}{(\sqrt{\alpha x^2+2\beta x+\gamma})^3}Y^1+\frac{\alpha x+\beta }{\sqrt{\alpha x^2+2\beta x+\gamma }}
Y^2-\partial_x(Y^2)\sqrt{\alpha x^2+2\beta x+\gamma }+\\
-\partial_z(Y^2)\frac{z(\alpha x+\beta )}{\sqrt{\alpha x^2+2\beta x+\gamma}}+\frac{\tau}{2} \frac {z(\alpha x+\beta)}{\sqrt{\alpha x^2+2\beta x+\gamma}} -  \frac{cz\sqrt{\alpha\gamma-\beta^2}}{\sqrt{\alpha x^2+2\beta x+\gamma}}=0
\end{array}
$$

$$
\begin{array}{c}
-\frac{z\sqrt{\alpha \gamma-\beta^2}}{\sqrt{\alpha x^2+2\beta x+\gamma}}\partial_z(Y^1)+c \sqrt{\alpha x^2+2\beta x+\gamma} =0\\
\end{array}
$$
$$
\begin{array}{c}
-\frac{z\sqrt{\alpha \gamma -\beta^2}(\alpha x+\beta)}{(\sqrt{\alpha x^2+2\beta x+\gamma})^3}Y^1+\frac{\sqrt{\alpha \gamma-\beta^2}}{\sqrt{\alpha x^2+2\beta x+\gamma}}Y^2-\frac{z\sqrt{\alpha \gamma-\beta^2}}{\sqrt{\alpha x^2+2\beta x+\gamma}}\partial_z(Y^2)\\
+c \frac{z(\alpha x+\beta )}{\sqrt{\alpha x^2+2\beta x+\gamma}}+\frac {\tau}{2}\frac {z\sqrt{\alpha \gamma-\beta^2}}{\sqrt{\alpha x^2+2\beta x+\gamma}}=0
\end{array}
$$
$$
\begin{array}{c}
\lambda Y^1-(\lambda x+\nu)\partial_x(Y^1)-\lambda z \partial_z(Y^1)-\frac{\alpha x^2+2\beta x+\gamma}{2}\partial_{xx}(Y^1)-\frac{\alpha z^2}{2}\partial_{zz}(Y^1)+\\
-z(\alpha x+\beta)\partial_{xz}(Y^1) + \tau (\lambda x+\nu)=0
\end{array}
$$
$$
\begin{array}{c}
\lambda Y^2-(\lambda x+\nu)\partial_x(Y^2)-\lambda z \partial_z(Y^2)-\frac{\alpha x^2+2\beta x+\gamma}{2}\partial_{xx}(Y^2)-\frac{\alpha z^2}{2}\partial_{zz}(Y^2)+\\
-z(\alpha x+\beta )\partial_{xz}(Y^2) + \tau \lambda z=0.
\end{array}
$$
We can easily solve the previous overdetermined system of PDEs by a computer algebra software and we find
 two quasi-strong symmetries
\begin{eqnarray*}
V_1=(Y_1,C_1,\tau_1)&=&\left(\left(
\begin{array}{c}
z \\
0
\end{array}\right),
\left(\begin{array}{cc}
0 &\frac{z\sqrt{\alpha \gamma -\beta^2}}{\alpha x^2+2\beta x+\gamma}\\
-\frac{z\sqrt{\alpha \gamma -\beta^2}}{\alpha x^2+2\beta x+\gamma} & 0
\end{array}\right),0 \right)\\
V_2=(Y_2,C_2,\tau_2)&=&\left(\left(
\begin{array}{c}
0 \\
z
\end{array}\right),
\left(\begin{array}{cc}
0 & 0\\
0 & 0
\end{array}\right),0 \right).\\
\end{eqnarray*}
Therefore the function  $\tilde{\Psi}:M \rightarrow \mathbb{R}$ given by  $\tilde{\Psi}(x,z)=x$ is a reduction function with respect to the strong
symmetry $Y_2$,  being  $\nabla(\tilde{\Psi}) \cdot Y_2=Y_2(\tilde{\Psi})=0$,  and
the reduced equation on $M'=\tilde{\Psi}(M)=\mathbb{R}$ is exactly the original SDE \refeqn{equation_linear}. \\
This circumstance partially explains Proposition \ref{proposition_linear1}: since the original SDE \refeqn{equation_linear} turns out as the reduction
of the integrable system
\refeqn{equation_linear2} with respect to the \virgolette{wrong} symmetry, it does not inherit any symmetry. \\
In order to integrate system  \refeqn{equation_linear2} and therefore also the original equation \refeqn{equation_linear}, we start by looking for a
stochastic transformation $T=(\Phi, B, \eta)$ such that $T_*(V_1)$ and $T_*(V_2)$ are strong transformations and $\Phi_*(Y_1)$ and $\Phi_*(Y_2)$ are in
canonical form. Since $V_1,V_2$ are quasi-strong infinitesimal stochastic transformations we can restrict to  a quasi-strong transformation  $T=(\Phi, B, 1)$.  \\
Following  the explicit construction of  Theorem \ref{theorem_solvable_coordinate} the function  $\Phi$ turns out to be both globally defined  and globally
invertible on $M$.\\
The vector fields $Y_1,Y_2$, whose flows are defined by
\begin{eqnarray*}
\Phi^1_{a^1}(x,z)&=&\left( \begin{array}{c}
x+a^1z\\
z \end{array}\right)\\
\Phi^2_{a^2}(x,z)&=&\left(\begin{array}{c}
x\\
e^{a^2} z\end{array} \right),
\end{eqnarray*}
generate a free and proper action of a solvable simply connected non Abelian Lie group on $M$.
Hence, if we consider the point $p=(0,1)^T$ and  the function   $F:\mathbb{R}^2 \to M$ given by
$$F(a^1,a^2)=\Phi^1_{a^1}(\Phi^2_{a^2}(p))=\left(\begin{array}{c}
a^1 e^{a^2} \\
e^{a^2}
\end{array} \right),$$
the function  $\Phi:M \rightarrow \mathbb{R}^2$, which  is the inverse of $F$,  is given by
$$\Phi(x,z)=\left(\begin{array}{c}
\frac{x}{z}\\
\log(z)
\end{array} \right).$$
By Theorem \ref{theorem_infinitesimal_SDE1} the equations for $B$ are
\begin{eqnarray*}
z\partial_x(B )&=&-B \cdot C_1\\
z\partial_z(B)&=&0.
\end{eqnarray*}
Writing:
\begin{equation}\label{matrixB}
B=\left(\begin{array}{cc}
b & \sqrt{1-b^2}\\
- \sqrt{1-b^2} & b
\end{array}\right)
\end{equation}
from the second equation we deduce that $B$ does not depend on $z$ and from the first one we obtain that $b$ satisfies the equation
$$\partial_x b =\frac{\sqrt{\alpha \gamma -\beta^2}}{\alpha x^2+2\beta x+\gamma} \sqrt{1-b^2}.$$
The latter equation is an ODE with separable variables admitting the following solution
$$b=\frac{\alpha x+\beta }{\sqrt{\alpha }\sqrt{\alpha x^2+2\beta x+\gamma}},$$
and we get
$$B=\left(\begin{array}{cc}
\frac{\alpha x+\beta }{\sqrt{\alpha }\sqrt{\alpha x^2+2\beta x+\gamma}} & \frac{\sqrt{\alpha \gamma-\beta^2}}{\sqrt{\alpha }\sqrt{\alpha x^2+2\beta x+\gamma}}\\
-\frac{\sqrt{\alpha \gamma-\beta^2}}{\sqrt{\alpha}\sqrt{\alpha x^2+2\beta x+\gamma}} & \frac{\alpha x+\beta }{\sqrt{\alpha }\sqrt{\alpha x^2+2\beta x+\gamma}}
\end{array}\right).$$
Putting $(x',z')^T=\Phi(x,z)$ we have
\begin{eqnarray*}
V'_1=T_*(V_1)&=&\left(\Phi_*(Y_1)=\left( \begin{array}{c}
1\\
0
\end{array}\right), \left( \begin{array}{cc}
0 & 0 \\
0 & 0 \end{array} \right), 0 \right),\\
V'_2=T_*(V_2)&=&\left(\Phi_*(Y_2)=\left(\begin{array}{c}
-x'\\
1
\end{array}\right), \left( \begin{array}{cc}
0 & 0 \\
0 & 0 \end{array}\right), 0 \right),
\end{eqnarray*}
and so $Y'_1,Y'_2$ are two generators of $\Phi_*(\mathcal{G})$ in  canonical form.
Introducing  $dW'_t = B(X_t,Z_t) \cdot dW_t$ and applying Ito formula respectively to $\phi_1=x/z$ and $\phi_2=\log(z)$ we  can write
 equation \eqref{equation_linear2} in the new variables:
\begin{eqnarray*}
dX'_t&=&(\nu-\beta )e^{-Z'_t}dt+\frac{\beta e^{-Z'_t}}{\sqrt{\alpha }}dW'^1_t-\sqrt{\frac{\alpha \gamma -\beta^2}{\alpha}}e^{-Z'_t}dW'^2_t\\
dZ'_t&=&\left(\lambda-\frac{\alpha}{2}\right)dt+\sqrt{\alpha}dW'^1_t.
\end{eqnarray*}
Since this is  an integrable SDE, the solutions to equation \refeqn{equation_linear2}
can be recovered following our general procedure.
In particular, when $\alpha \gamma -\beta^2=0$, the solution to equation \refeqn{equation_linear} is given by
\begin{eqnarray*}
Z_t&=&Z_0e^{(\lambda-\alpha /2)t+\sqrt{\alpha }W_t}\\
X_t&=&Z_t\left(\frac{X_0}{Z_0}+\int_0^t{\frac{\nu-\beta }{Z_s}ds}+\int_0^t{\frac{\beta}{\sqrt{\alpha}Z_s}dW_s}\right)
\end{eqnarray*}
which is the well known explicit solution to the (general) linear one-dimensional SDE  \refeqn{standardlinear}.

\subsection{Integrability of  a singular SDE}\label{subsection_singular}

Let us consider the SDE on $M=\mathbb{R}^2 \backslash \{(0,0)^T\}$
\begin{equation}\label{equation_singular}
\left(\begin{array}{c}
dX_t\\
dZ_t
\end{array}\right)=\left(
\begin{array}{c}
\frac{\alpha X_t}{X_t^2+Z_t^2}\\
\frac{-\alpha Z_t}{X_t^2+Z_t^2}
\end{array} \right)dt +
\left(\begin{array}{cc}
\frac{X_t^2-Z_t^2}{\sqrt{X_t^2+Z_t^2}} & 0\\
0 & \frac{X_t^2-Z_t^2}{\sqrt{X_t^2+Z_t^2}}
\end{array}\right) \cdot \left(\begin{array}{c}
dW^1_t\\
dW^2_t
\end{array}\right),
\end{equation}
where $\alpha \in \mathbb{R}$. Despite the coefficients of $(\mu, \sigma)$ have a singularity in $(0,0)^T$, we will prove that  the solution to \eqref{equation_singular}
is not singular and that the explosion time of \eqref{equation_singular}  is $+\infty$
for any deterministic initial condition $X_0 \in M$. \\
A symmetry $V=(Y,C,\tau) $ of \eqref{equation_singular}, with $Y=(Y^1,Y^2)$, $C=\left(\begin{array}{cc}
0 & c(x,z)\\
-c(x,z) & 0 \end{array}\right)$ and $\tau(x,z)$, has to satisfy the following determining equations

$$
\begin{array}{c}
\frac{x^3+3xz^2}{(x^2+z^2)^{3/2}}Y^1-\frac{z^3+3zx^2}{(x^2+z^2)^{3/2}}Y^2-\frac{x^2-z^2}{\sqrt{x^2+z^2}}\partial_x(Y^1)+\frac{1}{2}\tau \frac{x^2-z^2}{\sqrt{x^2+z^2}}=0
\end{array}$$

$$
\begin{array}{c}
-\frac{x^2-z^2}{\sqrt{x^2+z^2}}\partial_x(Y^2)-c\frac{x^2-z^2}{\sqrt{x^2+z^2}}=0
\end{array}$$

$$
\begin{array}{c}
-\frac{x^2-z^2}{\sqrt{x^2+z^2}}\partial_z(Y^1)+c\frac{x^2-z^2}{\sqrt{x^2+z^2}}=0
\end{array}$$

$$
\begin{array}{c}
\frac{x^3+3xz^2}{(x^2+z^2)^{3/2}}Y^1-\frac{z^3+3zx^2}{(x^2+z^2)^{3/2}}Y^2-\frac{x^2-z^2}{\sqrt{x^2+z^2}}\partial_z(Y^2)+\frac{1}{2}\tau\frac{x^2-z^2}{\sqrt{x^2+z^2}}=0
\end{array}$$

$$
\begin{array}{c}
-\frac{\alpha(x^2-z^2)}{(x^2+z^2)^2}Y^1-\frac{2 \alpha xz}{(x^2+z^2)^2}Y^2-\frac{\alpha x}{x^2+z^2}\partial_x(Y^1)
\\
+\frac{\alpha z}{x^2+z^2}\partial_z(Y^1)-\frac{1}{2}\frac{(x^2-z^2)^2}{x^2+z^2}(\partial_{xx}(Y^1)+\partial_{zz}(Y^1))+\tau \frac{\alpha x}{x^2+z^2}=0
\end{array}$$

$$
\begin{array}{c}
\frac{2\alpha xz}{(x^2+z^2)^2} Y^1-\frac{\alpha(x^2-z^2)}{(x^2+z^2)^2}Y^2-\frac{\alpha x}{x^2+z^2}\partial_x(Y^2)\\
+\frac{\alpha z}{x^2+z^2}\partial_z(Y^2)-\frac{1}{2}\frac{(x^2-z^2)^2}{x^2+z^2}(\partial_{xx}(Y^2)+\partial_{zz}(Y^2))-\tau \frac{\alpha z}{x^2+z^2}=0
\end{array}$$

Solving this system of PDEs by a computer algebra software we find the
unique (quasi-strong) symmetry
$$
V=(Y,C,\tau)=\left(\left(\begin{array}{c}
\frac{z}{x^2+z^2}\\
\frac{x}{x^2+z^2}
\end{array} \right), \left(\begin{array}{cc}
0 & \frac{x^2-z^2}{(x^2+z^2)^2}\\
-\frac{x^2-z^2}{(x^2+z^2)^2} & 0
\end{array}\right),0\right)
$$
which unfortunately does not generate a one parameter group of stochastic transformations,
as the trajectories of the points of the form $(h,h)$ and $(h,-h)$ (with  $h \in \mathbb{R} \backslash \{0\})$, reach $(0,0)$ in a finite time.\\
Hence, in order to find a stochastic transformation $T=(\Phi,B,\eta)$ such that $T_*(V)$ is a strong transformation and $\Phi_*(Y)$ is in canonical form  we have to solve the equations $Y(\Phi)=(1,0)^T$ and  $Y(B)=-B \cdot C$. Writing $\Phi=(\tilde{\Phi},\Psi)^T$, the map $\Psi$ has to solve
$Y(\Psi)=0$, i.e.
$$\frac{z}{x^2+z^2}\partial_x(\Psi)+\frac{x}{x^2+z^2}\partial_z(\Psi)=0.$$
By using the method of the characteristics,  we immediately obtain a particular solution
$$\Psi(x,z)=x^2-z^2.$$
In order to find an adapted coordinate system for the Abelian Lie algebra $\mathcal{G}=\spann\{Y\}$ we have to solve the equation
$Y(\tilde{\Phi})=1$, i.e.
$$z \partial_x(\tilde{\Phi})+x\partial_z(\tilde{\Phi})=x^2+z^2.$$
Once again,  applying the method of the characteristics,  we obtain
$$\tilde{\Phi}(x,z)=xz, $$
so that we can consider the local diffeomorphism of $M=\mathbb{R}^2 \backslash \{(0,0)^T\}$
$$\Phi(x,z)=\left( \begin{array}{c}
\tilde{\Phi}\\
\Psi \end{array} \right)=\left( \begin{array}{c}
xz\\
x^2-z^2
\end{array}\right)$$
which is only locally invertible. \\
To construct the matrix-valued function $B$ of the form \eqref{matrixB} we solve the equation $Y(B)=-B \cdot C$. In the new coordinates $(x',z')=\Phi(x,z)$ the equation   becomes
$$\partial_{x'}b=\frac{z'}{4x'^2+z'^2} \sqrt{1-b^2},$$
whose solution is
$$b=\pm\frac{\sqrt{\sqrt{z'^2+4x'^2}-z'}}{\sqrt{2}(z'^2+4x'^2)^{1/4}}$$
and, coming back to the original coordinate system,  we find
$$B=\left(\begin{array}{cc}
\frac{z}{\sqrt{x^2+z^2}} & \frac{x}{\sqrt{x^2+z^2}}\\
\frac{-x}{\sqrt{x^2+z^2}} & \frac{z}{\sqrt{x^2+z^2}}\end{array}\right).$$
The transformed SDE $(\mu',\sigma')=E_T(\mu,\sigma)$ has coefficients
\begin{eqnarray*}
\mu'&=&L(\Phi) \circ \Phi^{-1}=\left( \begin{array}{c}
0\\
2\alpha \end{array}\right)\\
\sigma'&=&(\nabla(\Phi) \cdot \sigma) \circ \Phi^{-1}=\left( \begin{array}{cc}
z' & 0\\
0 & -2z' \end{array}\right)
\end{eqnarray*}
and, by applying Ito formula, the original two-dimensional SDE becomes
\begin{equation}\label{equation_singular7}
\begin{array}{ccl}
dX'_t&=&Z'_t dW'^1_t\\
dZ'_t&=&2\alpha dt- 2 Z'_t dW'^2_t,
\end{array}
\end{equation}
where $dW'_t=B(X_t,Z_t) \cdot dW_t$. Since the equation in $Z'$ is linear, the above SDE is integrable and therefore also \refeqn{equation_singular}
is integrable. Furthermore, since the map $\Phi:M \rightarrow \mathbb{R}^2 \backslash \{(0,0)^T\}$ is a double covering map and since the SDE
\refeqn{equation_singular7} has explosion time $\tau=+\infty$, the SDE \refeqn{equation_singular},  although singular at the origin,  has also
explosion time $\tau=+\infty$  for any deterministic initial condition $X_0 \in M$. \\
This example point out  the importance of developing a local reduction theory for  SDEs, since in this case a
global approach cannot be successful.

\subsection{Stochastic perturbation of mechanical equations}\label{subsection_mechanics}

In this example we analyze a wide  class of models, related to  (stochastic) mechanics, of the form
\begin{equation}\label{equation_mechanics1}
\begin{array}{ccl}
dX^i_t&=& V^i_t dt\\
dV^i_t&=&F^i_0(X_t,V_t)dt+\sum_{\alpha} F^i_{\alpha}(X_t) dW^{\alpha}_t,
\end{array}
\end{equation}
i.e. with SDE coefficients:
\begin{eqnarray*}
\mu&=&\left(\begin{array}{c}
v^1\\
\vdots \\
v^n\\
F^1_0(x,v)\\
\vdots\\
F^n_0(x,v)\end{array}\right)\\
\sigma&=&\left(\begin{array}{ccc}
0 & \cdots & 0\\
\vdots & \vdots & \vdots\\
F^1_1(x) & \cdots & F^1_m(x)\\
\vdots & \vdots & \vdots \\
F^n_1(x) & \cdots & F^n_m(x) \end{array}\right),
\end{eqnarray*}
where $(x^i,v^i)$ is the standard coordinate system of $M=\tilde{M} \times \mathbb{R}^n$ and  $\tilde{M}$ is an open set of $\mathbb{R}^n$.\\
This kind of SDEs, representing   a stochastic perturbation of the Newton equations for $n$ particles of mass $m_i=1$  subjected to forces
depending on the positions and on the velocities
\begin{equation}\label{equation_mechanics2}
\frac{d^2X^i_t}{dt^2}=F^i\left(X_t,\frac{dX_t}{dt}\right),
\end{equation}
arise in many contexts of mathematical physics. The class includes the Langevin type equation  
often used in the
 framework of Stochastic Thermodynamics (see, e.g.,
\cite{Marconi2008,Seifert2012}) for $F^i_{\alpha}=\delta^i_{\alpha}$ and
 $F^i_0=-\gamma V^i+ \partial_i(U)(x)$, where  $U:\mathbb{R}^n \rightarrow \mathbb{R}$ is a smooth function and
$\gamma \in \mathbb{R}_+$.
Furthermore, if the forces $F^i_0$ arise from a Lagrangian $L$ of the form
$$L=\frac{1}{2}\sum_{i,j}g_{i,j}(x)v^iv^j-U(x)$$
(where $g_{i,j}(x)$ is a metric tensor on $\mathbb{R}^n$) and the random perturbations $F^i_{\alpha}$ are given by
$$F^i_{\alpha}=\sum_jg^{i,j}(x)\partial_j(U_{\alpha}),$$
with $U_{\alpha}$ smooth functions,  \refeqn{equation_mechanics1} turns out to be a Lagrangian system with  the following action functional
$$S=\int_0^t{L(X_s,V_s)ds}+\sum_{\alpha}\int_0^t{U_{\alpha}(X_s)dW^{\alpha}_s}$$
(see, e.g.,\cite{Bismut1981,Ortega2008}). There is a growing interest for this kind of stochastic perturbations
of Lagrangian and Hamiltonian systems due both to  their special mathematical properties and to their applications in mathematical physics
(see, e.g., \cite{Arnaudon_Holm2016,Cruzeiro2014,Ratiu2015,Holm2015,Lescot,Li_integrable,Zambrini2010}).\\
In the following we propose  a method to obtain a SDE of the form \refeqn{equation_mechanics1} which can be interpreted as a symmetric stochastic perturbation of a symmetric  ODE
of the form \refeqn{equation_mechanics2}.\\
Given  a vector field $\tilde{Y}_0=(\tilde{Y}^1_0(x),...,\tilde{Y}^n_0(x))^T$ on $\tilde{M}$ which  is a symmetry of \refeqn{equation_mechanics2}, the vector field
$$Y=\left(\begin{array}{c}
\tilde{Y}_0^1(x) \\
\vdots \\
\tilde{Y}_0^n(x) \\
\sum_{k=1}^n \partial_{x^k}(\tilde{Y}_0^1) v^k\\
\vdots\\
\sum_{k=1}^n \partial_{x^k}(\tilde{Y}_0^n) v^k \end{array}\right),$$
is a symmetry of  \refeqn{equation_mechanics1}, when $F^i_{\alpha}=0$. \\
If $\tilde{Y}_{\alpha}=(\tilde{Y}^1_{\alpha}(x),...,\tilde{Y}^n_{\alpha}(x))$, for $\alpha=1,...,m$, are
 $m$ vector fields on $\tilde{M}$ such that there exists a matrix-valued function $C:\mathbb{R}^n \rightarrow so(m)$ satisfying
$$[\tilde{Y}_0,\tilde{Y}_{\alpha}]=-\sum_{\beta=1}^m C_{\alpha}^{\beta}(x) \tilde{Y}_{\beta}$$
and we set
$$F^i_{\alpha}(x)=\tilde{Y}^i_{\alpha}(x), \qquad i=1,...,n $$
we find that $V=(Y,C,0)$ is a quasi-strong symmetry for the system \refeqn{equation_mechanics1}. \\
Indeed, the first determining equation \refeqn{determining_eq}    for $i=n+1,...,2n$ becomes:
\begin{eqnarray*}
[Y,\sigma]^i_{\alpha}&=&\sum_{j=1}^{2n} (Y^j \partial_j(\sigma^i_{\alpha})-\sigma^j_{\alpha} \partial_j(Y^i))\\
&=&\sum_{j=1}^n \tilde{Y}_0^j \partial_{x^j}(\tilde{Y}_{\alpha}^{i-n})-\sum_{j=n+1}^{2n} \tilde{Y}_{\alpha}^{j-n}\partial_{v^j}
\left(\sum_{k=1}^n \partial_{x^k}(\tilde{Y}_0^i) v^k\right)\\
&=&\sum_{j=1}^n \tilde{Y}_0^j \partial_{x^j}(\tilde{Y}_{\alpha}^{i-n})-\sum_{j=1}^n\tilde{Y}_{\alpha}^{j}\partial_{x^j}(\tilde{Y}_0^{i-n})\\
&=&[\tilde{Y},\tilde{Y}_{\alpha}]^{i-n}=-\sum_{\beta=1}^m C^{\beta}_{\alpha} \tilde{Y}^i_{\beta}=-\sum_{\beta=1}^m C^{\beta}_{\alpha} \sigma^i_{\beta}.
\end{eqnarray*}
Since $\sigma^i_{\alpha}=0$ for $i\leq n$ and $Y^i$ does not depend on $v$ for $i \leq n$, we have
$$[Y,\sigma]^i_{\alpha}=-\sum_{\beta=1}^m C^{\beta}_{\alpha} \sigma^i_{\beta}, \qquad i\leq n. $$
Furthermore
$$Y(\mu^i)-L(Y^i)=Y(\mu^i)-\mu(Y^i)=[Y,\mu]^i=0$$
because $Y$ is a symmetry of  \refeqn{equation_mechanics2}.\\

An interesting particular case within this class is given by the following equation
$$\frac{d^2X_t}{dt}=-\gamma \frac{dX_t}{dt},$$
representing (for $\gamma >0$)  the motion of a particle subjected  to  a linear dissipative force. This equation has the symmetry $\tilde{Y}=x\partial_x$,
so that the system
\begin{equation}\label{equation_mechanics5}
\left(\begin{array}{c}
dX_t\\
dV_t\end{array} \right)=\left( \begin{array}{c}
V_t\\
-\gamma V_t \end{array} \right)dt+\left(\begin{array}{c}
0\\
\alpha X_t\end{array}\right)dW_t,
\end{equation}
which provides the equation of a dissipative random harmonic oscillator, has the strong symmetry
$$ Y=\left(\begin{array}{c}
x\\
v\end{array}\right).$$
If we consider the local diffeomorphism
$$\Phi(x,v)=\left( \begin{array}{c}
\frac{1}{2}\log(x^2+v^2)\\
\frac{v}{x}\end{array}\right), $$
in the coordinates $(x',v')^T=\Phi(x,v)$ equation \refeqn{equation_mechanics5} becomes
\begin{eqnarray*}
dX'_t&=&\left(-\frac{\gamma (V'_t)^2 - V'_t}{1+V'^2_t}+\alpha^2 \frac{1-V'^2_t}{2+4V'^2_t+2V'^4_t}\right)dt+\frac{\alpha V'_t}{1+V'^2_t}dW_t\\
dV'_t&=&(-\gamma V'_t-V'^2_t)dt+\alpha dW_t.
\end{eqnarray*}
This system is not integrable but the equation for $V'_t$ is known in literature (see \cite{Gard1988}). Furthermore, as well as its deterministic counterpart,
this equation admits a superposition rule \cite{Ortega2009}.\\
Another interesting example within the class of the equations described by \refeqn{equation_mechanics1} is given by
 the following system
\begin{equation} \label{equation_mechanics6}
\left( \begin{array}{c}
dX_t\\
dZ_t\\
dV^x_t\\
dV^z_t
\end{array} \right)=\left(\begin{array}{c}
V^x_t\\
V^z_t\\
\frac{X_tf(\sqrt{X_t^2+Z_t^2})}{\sqrt{X^2_t+Z^2_t}}-\gamma V^x_t\\
\frac{Z_tf(\sqrt{X_t^2+Z_t^2})}{\sqrt{X^2_t+Z^2_t}}-\gamma V^z_t \end{array}\right) dt+ \left(\begin{array}{cc}
0 & 0\\
0 & 0\\
D & 0\\
0 & D \end{array} \right) \cdot \left(\begin{array}{c}
dW^1_t\\
dW^2_t
\end{array} \right),
\end{equation}
where $D \in \mathbb{R}_+$ and $f:\mathbb{R}_+ \rightarrow \mathbb{R}$ is a smooth function. The SDE \refeqn{equation_mechanics6} is a Langevin type
equation  describing a  point particle of unitary mass subjected to the central force $f(\sqrt{x^2+z^2})$, to an isotropic dissipation linear
in the velocities and to a space homogeneous random force. Since both the central force and the dissipation are invariant under the rotation group, the vector field
$$\tilde{Y}=\left(\begin{array}{c}
z\\
-x \end{array} \right)$$
is a symmetry of  \refeqn{equation_mechanics6} for $D=0$. Furthermore we have that
$$\left[\tilde{Y}, \left(\begin{array}{cc}
1 & 0 \\
0 & 1\end{array}\right)\right]=-\left(\begin{array}{cc}
1 & 0 \\
0 & 1\end{array}\right) \cdot \left(\begin{array}{cc}
0 & 1\\
-1 & 0
\end{array} \right).$$
So putting
\begin{eqnarray*}
Y&=&\left(\begin{array}{c}
z\\
-x\\
v^z\\
-v^x \end{array} \right)\\
C&=&\left(\begin{array}{cc}
0 & 1\\
-1 & 0
\end{array} \right)
\end{eqnarray*}
the infinitesimal stochastic transformation $V=(Y,C,0)$ is a quasi-strong symmetry for equation \refeqn{equation_mechanics6}. \\
In order to reduce  \refeqn{equation_mechanics6} using the  symmetry $V$, we have to find the  stochastic transformation $T=(\Phi,B,1)$ which puts $V$ in canonical form.
Solving the equations for $\Phi$ and $B$ we obtain
\begin{eqnarray*}
\Phi(x,z,v^x,v^z)&=&\left(\begin{array}{c}
\operatorname{acos}\left(\frac{x}{\sqrt{x^2+z^2}}\right)\\
\sqrt{z^2+x^2}\\
\frac{x v^z-z v^x}{x^2+z^2}\\
\frac{xv^x+zv^z}{\sqrt{x^2+z^2}} \end{array} \right)\\
B(x,z,v^x,v^z)&=&\left(\begin{array}{cc}
\frac{x}{\sqrt{x^2+x^2}} & \frac{z}{\sqrt{x^2+z^2}}\\
-\frac{z}{\sqrt{x^2+z^2}} & \frac{x}{\sqrt{x^2+z^2}}
\end{array} \right).
\end{eqnarray*}
With the new coordinates $\Phi=(\theta,r,v^{\theta},v^r)^T$ and with the new Brownian motion $dW'_t=B \cdot dW_t$ we have
\begin{eqnarray*}
d\Theta_t&=&V^{\theta}_t dt\\
dR_t&=&V^r_t dt\\
dV^{\theta}_t&=&\left(-\frac{2V^r_tV^{\theta}_t}{R_t}-\gamma V^{\theta}_t\right)dt+\frac{D}{R_t} dW'^2_t\\
dV^r_t&=&\left(R_t(V^{\theta}_t)^2-\gamma V^r_t+f(R_t) \right)dt+D dW'^1_t,
\end{eqnarray*}
where the solution $\Theta_t$ can be reconstructed from $(R_t,V^r_t,V^{\theta}_t)$.

\begin{remark}
For $\gamma=0$ equation \refeqn{equation_mechanics6} is a Lagrangian SDE with action functional
$$S=\int_0^t{\left(\frac{1}{2}((V^x_s)^2+(V^z_s)^2)-F(\sqrt{X^2_s+Z^2_s})\right)ds}+\int_0^t{X_sdW^1_s}+\int_0^t{Z_tdW^2_s},$$
where $F(r)=\int_0^r{f(\rho)d\rho}$. The flow of the quasi-strong symmetry $V$ leaves the functional $S$ invariant, but  equation \refeqn{equation_mechanics6} does not admit a conservation law associated with $V$.
Furthermore, as already noted in  \cite{Zung}, the reduction of \refeqn{equation_mechanics6} along $V$ allows us to reduce by one (and not
by two as in the deterministic case) the dimension of the system.
\end{remark}

\subsection{A financial mathematics application: the SABR model}
In this section we discuss  a stochastic volatility model used in mathematical finance to describe the stock price $s$ with volatility $u$
under an equivalent martingale measure for $s$ (see \cite{SABR1}). The deep geometric properties of this model, related with Brownian
motion on the Poincar\`e plane, are well known and suitably exploited in order to obtain asymptotic expansion formula for options evaluation  (see \cite{SABR2}).\\
The SABR model is a two-dimensional system of the  form
\begin{equation}\label{equation_SABR}
\left(\begin{array}{c}
dS_t \\
dU_t
\end{array}\right)=\left(\begin{array}{cc}
U_t (S_t)^{\beta} & 0\\
\alpha U_t \rho & \alpha U_t\sqrt{1-\rho^2}\end{array}\right) \cdot
\left(\begin{array}{c}
dW^1_t\\
dW^2_t
\end{array}\right),
\end{equation}
where $\beta,\alpha,\rho \in \mathbb{R}$ and $0<\beta<1$ and $0\leq \rho \leq 1$.\\
This SDE admits two  symmetries
\begin{eqnarray*}
V_1=(Y_1,C_1,\tau_1)&=&\left(\left(
\begin{array}{c}
0\\
1\end{array}\right),0,-\frac{2}{u}\right),\\
V_2=(Y_2,C_2,\tau_2)&=&\left(\left(
\begin{array}{c}
s\\
(1-\beta) u
\end{array}\right),0,0\right)\\
\end{eqnarray*}
and we can find a suitable random time change  transforming \eqref{equation_SABR} into an integrable SDE as a part of a stochastic transformation  $T=(\Phi,B,\eta)$ with $B=I_2$ satisfying
\begin{eqnarray*}
Y_1(\Phi)&=&\left(\begin{array}{c}
0\\
1\end{array}\right)\\
Y_2(\Phi)&=&\left(\begin{array}{c}
1\\
f(s,u)\end{array}\right)\\
Y_1(\eta)&=&-\tau_1 \eta\\
Y_2(\eta)&=&0
\end{eqnarray*}
(where $f$ is an arbitrary function). A solution to this system   is
\begin{eqnarray*}
\Phi(s,u)&=&\left(\begin{array}{c}
\log s\\
u \end{array}\right),\\
\eta&=&\frac{u^2}{s^{2-2\beta}}
\end{eqnarray*}
and putting  $t'=\int_0^t{\eta(S_s,U_s)ds}$ and $(s',u')=\Phi(s,u)$  we obtain the following  SDE in the new coordinates
$$\left(\begin{array}{c}
dS'_{t'} \\
dU'_{t'}
\end{array}\right)=\left(\begin{array}{c}
-\frac{1}{2} \\
0 \end{array}\right) dt' +\left(\begin{array}{cc}
1 & 0\\
\alpha e^{(1-\beta) S'_{t'}} \rho & \alpha e^{(1-\beta) S'_{t'}} \sqrt{1-\rho^2}\end{array}\right)
\left(\begin{array}{c}
dW'^1_{t'}\\
dW'^2_{t'}
\end{array}\right), $$
that is easily integrable.\\

In \cite{Antonov} the Authors, discussing the  non-correlated SABR model
($\rho=0$), propose a different  random time change
$$\tilde{t}=\int_0^t{U_s^2ds}$$
in order to derive  an analytic formula for the  solutions to \eqref{equation_SABR}.
According to this new time variable, the equation becomes
$$\left(\begin{array}{c}
dS_{\tilde{t}} \\
dU_{\tilde{t}}
\end{array}\right)=\left(\begin{array}{cc}
(S_{\tilde{t}})^{\beta} & 0\\
\alpha \rho & \alpha \sqrt{1-\rho^2}\end{array}\right)
\left(\begin{array}{c}
d\tilde{W}^1_{\tilde{t}}\\
d\tilde{W}^2_{\tilde{t}}
\end{array}\right),$$
and its symmetries are
\begin{eqnarray*}
\tilde{V}_1&=&\left(\left(
\begin{array}{c}
0\\
1\end{array}\right),0,0\right),\\
\tilde{V}_2&=&\left(\left(
\begin{array}{c}
s\\
(1-\beta) u
\end{array}\right),0,2(1-\beta)\right).\\
\end{eqnarray*}
Therefore the time change $\tilde{t}$ transforms $V_1$ into the strong symmetry $\tilde{V}_1$ and $V_2$ into the  symmetry  $\tilde{V}_2$, which,
since  $\beta$ is a constant, corresponds to a deterministic time change. The symmetry $\tilde{V}_2$, restricted to the $s$ variable, is the symmetry of a Bessel process:
indeed the process $S$ solves an equation for a spatial changed Bessel process. The Bessel process is one of the few one-dimensional stochastic processes
whose transition probability  is  explicitly known and  is a special case of the general  affine processes
 class ( see \cite{affine}). We remark that the time change $\tilde{t}$ can be uniquely characterized by the special form of $\tilde{V}_1$ and $\tilde{V}_2$,
 whose expression can be recovered within our symmetry analysis. Finally this last example suggests the  possibility of extending the integrability notion
 to processes which are not progressively reconstructible from gaussian processes but, more in general, from  other   processes  with notable analytical properties,
 such as Bessel process, affine processes or other processes.

\section*{Acknowledgements}
This work was  supported by National Group of Mathematical Physics (GNFM-INdAM).

\bibliographystyle{plain}
\bibliography{reduction_and_integrability(2)}

\end{document}